\newcommand{\partentry}[1]{\addtocontents{toc}
                        {\small\bfseries#1\hfill\thepage\par}}
\def\@part[#1]#2{%
    \ifnum \c@secnumdepth >\m@ne
      \refstepcounter{part}
      \partentry{\protect\makebox[2em][l]{\thepart}#1}
    \else
      \partentry{#1}
    \fi
    {\parindent \z@ \raggedright
     \interlinepenalty \@M
     \normalfont\Large\bfseries\thepart\hspace{1em}#2%
     \markboth{}{}\par}%
    \nobreak
    \vskip 3ex
    \@afterheading}
\def\@spart#1{%
    {\parindent \z@ \raggedright
     \interlinepenalty \@M
     \normalfont\Large\bfseries #1\par}
     \nobreak
     \vskip 3ex
     \@afterheading}
\numberwithin{equation}{subsection}
\newtheorem{thm}[equation]{Theorem}
\newtheorem{pro}[equation]{Proposition}
\theoremstyle{remark}
\newtheorem{defn}[equation]
               {Definition}
\newtheorem{rem}[equation]
               {Remark}
\newcommand{\CC}{\mathbb C}
\newcommand{\HH}{\mathcal H}
\newcommand{\QQ}{\mathbb Q}
\newcommand{\ZZ}{\mathbb Z}
\newcommand{\Ocal}{\mathcal O}
\newcommand{\Lcal}{\mathcal L}
\newcommand{\Dcal}{{\mathcal D}}
\newcommand{\id}{\operatorname{id}}
\newcommand{\Aut}{\operatorname{Aut}}
\newcommand{\pt}{\operatorname{pt}}
\newcommand{\Ext}{\operatorname{Ext}}
\newcommand{\Spec}{\operatorname{Spec}}
\newcommand{\Res}{\operatorname{Res}}
\newcommand{\al}{\alpha}
\newcommand{\fie}{\varphi}
\newcommand{\ka}{\kappa}
\newcommand{\ev}{{\rm ev}}
\newcommand{\ie}{{\it i.e.}}
\newcommand{\cw}{{c}_{W}}
\newcommand{\irr}{\operatorname{irr}}
\newcommand{\ch}{{\rm ch}}
\newcommand{\td}{\operatorname{td}}
\newcommand{\textsum}{{\textstyle{\sum}}}
\providecommand{\abs}[1]{\lvert#1\rvert}
\def\pmmu{{\pmb \mu}}
\def\wt{\widetilde}
\def\d{\partial}
\newcommand{\MMM}{\mathcal M}
\newcommand{\MMMbar}{{\overline{\mathcal M}\hspace{1pt}}}
\newcommand{\CCCbar}{{\overline{\mathcal C}\hspace{1pt}}}
\begin{document}

\title{\textbf{Twisted Gromov--Witten $r$-spin potential
and  Givental's quantization}}
\author{A.~Chiodo, D.~Zvonkine} \maketitle

\begin{abstract}
The universal curve $\pi\colon\CCCbar \rightarrow \MMMbar$ over the moduli
space~$\MMMbar$ of stable $r$-spin maps to a target K\"ahler manifold~$X$
carries a universal spinor bundle $\Lcal \to \CCCbar$.
Therefore the moduli space $\MMMbar$
itself carries a natural K-theory class $R \pi_* \Lcal$.

We introduce a {\em twisted} $r$-spin Gromov--Witten potential
of~$X$ enriched with Chern characters of $R \pi_* \Lcal$. We show
that the twisted potential can be reconstructed from the
ordinary $r$-spin Gromov--Witten potential of~$X$ via an
operator that assumes a particularly simple form in
Givental's quantization formalism.
\end{abstract}

\section{Introduction}

In~\cite{Mumford} D.~Mumford used the Grothendieck--Riemann--Roch formula
to express the Chern characters of the Hodge bundle over the
moduli space of stable curves via other tautological classes.

C.~Faber and R.~Pandharipande~\cite{FP} generalized his result on
spaces of stable maps. They showed that the resulting formula allows
one to express the so-called {\em twisted} Gromov--Witten
potential (enriched with Chern characters of the Hodge bundle)
of any target K\"{a}hler manifold via the usual Gromov--Witten
potential.

A.~Givental~\cite{Givental} noted that the above result admitted a
strikingly concise formulation in the framework of his quantization
formalism. This allows many explicit calculations of twisted
Gromov--Witten potentials starting from known ``untwisted'' potentials
(see \cite{CG}, \cite{Ts}, \cite{CCIT} and
references therein).

In the present paper we generalize all these steps to the
spaces of $r$-spin structures and maps,
Theorem~\ref{Thm:main}. Witten's $r$-spin conjecture,
proved in~\cite{FSZ}, determines the $r$-spin untwisted Gromov--Witten potential of the point. Theorem~\ref{Thm:main} can be regarded as the natural tool to calculate $r$-spin Gromov--Witten potentials beyond the untwisted cases,
see Remark~\ref{rem:main}.

\subsection{Mumford's formula and Givental's formalism}

\subsubsection{Moduli spaces}
Let $\MMMbar_{g,n}$ denote the moduli space of
genus-$g$ stable curves with $n$ marked points, and let
$\CCCbar_{g,n} \rightarrow \MMMbar_{g,n}$ be the universal curve.
In one-to-one correspondence with the marked points,
natural cohomology classes
$\psi_1, \dots, \psi_n \in H^2(\MMMbar_{g,n},\QQ)$
are defined on the moduli space:
the cotangent lines at the $i$th markings form a line bundle
$L_i\to \MMMbar_{g,n}$, and $\psi_i$ equals $c_1(L_i)$.
Moreover, if
$p\colon\MMMbar_{g,n+1} \rightarrow \MMMbar_{g,n}$ is the forgetful map,
we define $\ka_d  = p_*(\psi_{n+1}^{d+1}) \in
H^{2d}(\MMMbar_{g,n},\QQ)$, for $d \geq 1$.
(We refer to~\cite{GraPan} for an overview of these and other
geometrically defined cohomology classes on $\MMMbar_{g,n}$ and their properties.)

\subsubsection{Boundary}
Set
$$
\Dcal =
\MMMbar_{g-1,n+2} \; \sqcup \!\!\!\!
\bigsqcup_{
\substack{
l + l' = g \\ I \sqcup I' = \{ 1, \dots, n \}
}} \!\!\!\!
\MMMbar_{l, I \cup \{n+1 \}} \times \MMMbar_{l', I' \cup \{n+2 \}}.
$$
Then there is a natural map $j\colon \Dcal \rightarrow
\MMMbar_{g,n}$, obtained by gluing together the marked points
number $n+1$ and $n+2$. The image of~$j$ is the boundary of
$\MMMbar_{g,n}$.

\subsubsection{Mumford's formula}
In~\cite{Mumford}, D.~Mumford applied the Grothendieck--Riemann--Roch
formula to express the Chern characters $\ch_d(\Lambda)$ of the
Hodge bundle $\Lambda \rightarrow \MMMbar_{g,n}$.
His formula reads, for $d \geq 1$,
\begin{equation} \label{Eq:Mumford}
\ch_d(\Lambda) = \frac{B_{d+1}}{(d+1)!}
\left[ \kappa_d - \sum_{i=1}^n \psi_i^d
+ \frac12 j_*
\biggl(\,
\sum_{a=0}^{d-1} \psi_{n+1}^a (-\psi_{n+2})^{d-1-a}
\biggr)
\right],
\end{equation}
where $B_{d+1}$ is the $(d+1)$st Bernoulli number.
In particular, $\ch_d(\Lambda)=0$ for even~$d$, because odd Bernoulli
numbers $B_{d+1}$ vanish.

\subsubsection{Differential operators}
Let $X$ be a K\"{a}hler manifold and $(h_\mu)_{\mu \in B}$
a basis of the cohomology
space $H=H^*(X,\QQ)$ such that $h_1 = 1$. For simplicity we
assume that $X$ has only even cohomology.
Let $g_{\mu\mu'}$ be the matrix of the Poincar\'{e}
duality form on $H$ in the basis~$(h_\mu)$.
Denote by $X_{g,n,D}$ the space of degree~$D$ stable maps
with target~$X$, of genus~$g$, with $n$ marked points.
Here $D$ is an effective cycle in~$H_2(X,\ZZ)$.
Let $[X_{g,n,D}]^v$ denote its virtual fundamental class.

Introduce the following generating series in the variables
Q, $s_d$, $d \geq 1$, and $t_a^\mu$, $a \geq 0$, $\mu \in B$:
$$
F_g({\pmb t},{\pmb s})= \sum_{n \geq 1} \sum_D
Q^D \sum_{
\substack{a_1, \dots, a_n \\ \mu_1, \dots, \mu_n}
} \; \;
\int\limits_{[X_{g,n,D}]^v} \!\!\!\!\!\!
\exp\Bigl(\sum_{d \geq 1} s_d \, \ch_d(\Lambda)\Bigr)
\; \prod_{i=1}^n \psi_i^{a_i} \ev_i^*(h_{\mu_i})  \cdot
 \frac{t_{a_1}^{\mu_1} \dots t_{a_n}^{\mu_n}}{n!}.
$$
The series $F= \sum_{g \geq 0} \hbar^{g-1} F_g$
is called the
{\em twisted Gromov--Witten potential of~$X$}.
Let $Z = \exp\,F$.

Denote by $g^{\mu\mu'}$ the inverse matrix of $g_{\mu\mu'}$.
From the expression of $\ch_d(\Lambda)$,
C.~Faber and R.~Pandharipande~\cite{FP} deduced the
following claim: {\em we have
$$
\frac{\d Z}{\d s_d} = L_d Z,
$$
where $L_d$ is the linear differential operator depending only on the
variables $t_{a}^\mu$:
$$
L_d = \frac{B_{d+1}}{(d+1)!} \left[
\frac{\d}{\d t_{d+1}^1} -
\sum_{a \geq 0,\mu} t_a^\mu \frac{\d}{\d t_{a+d}^\mu}
+\frac{\hbar}2
\sum_{
\substack{a+a' = d-1 \\ \mu,\mu'}
} (-1)^{a'} g^{\mu\mu'} \frac{\d^2}{\d t_a^\mu \d t_{a'}^{\mu'}}
\right].
$$
}

\subsubsection{Givental's quantization} \label{Sssec:Givental}
Let $\HH = H((z^{-1}))$ be the space of $H$-valued Laurent series in~$z$,
finite in the positive direction and possibly infinite in
the negative direction. This space bears a natural symplectic
form
$$
\omega(f_1,f_2) = \Res_{z=0} \sum_{\mu,\mu'} g_{\mu\mu'}
f_1^\mu(-z) f_2^{\mu'}(z).
$$
If we write a Laurent series in the form
$$
f(z) = \sum_{\substack{a \geq 0\\ \mu \in B}} q_a^\mu z^a h_\mu +
\sum_{\substack{a \geq 0\\ \mu,\mu' \in B}}
p_{a,\mu}g^{\mu\mu'} (-1/z)^{a+1}  h_{\mu'}
$$
(the first sum is finite, while the second one can be infinite),
then $p_{a,\mu}, q_a^\mu$ form a set of Darboux coordinates on~$\HH$.
The coordinates $q_a^\mu$ are identified with the variables
$t_a^\mu$ above via
\begin{eqnarray*}
q_1^1 &=& t_1^1-1, \\
q_a^\mu &=& t_a^\mu \qquad \mbox{for other } a \mbox{ and } \mu.
\end{eqnarray*}
The strange-looking shift for $a=1, \mu=1$ is called the {\em dilaton shift}.

Functions (or {\em Hamiltonians}) on the
space $\HH$ are quantized according to Weyl's
rules: $q_a^\mu$ is transformed into the operator of multiplication
by $q_a^\mu/\sqrt{\hbar}$, while $p_{a,\mu}$ is transformed into the
operator $\sqrt{\hbar} \, \d/\d q_a^\mu$. By convention, the derivations
are applied before the multiplications. Using these rules,
we obtain the following result.

{\em The operator $L_d$ is the Weyl quantization of
the Hamiltonian
$$
P_d = \frac{B_{d+1}}{(d+1)!} \left[
- \sum_{\substack{a \geq 0\\ \mu \in B}} q_{a}^\mu p_{a+d,\mu}
+ \frac12 \sum_{\substack{a+a' = d-1\\ \mu,\mu' \in B}}
(-1)^{a'} g^{\mu\mu'} p_{a,\mu} p_{a',\mu'}
\right].
$$
}

Every Hamiltonian~$P$ determines a vector field on (or an
infinitesimal symplectic transformation of)~$\HH$,
given by~$\omega^{-1}(dP)$.
Since our Hamiltonian is of pure degree~2, the corresponding vector
field is linear. Following~\cite{Givental}, we obtain that {\em the
vector field on $\HH$ determined by $P_d$ is given by the
multiplication by
\begin{equation} \label{Eq:field}
\frac{B_{d+1}}{(d+1)!} z^d.
\end{equation}
}

The Hamiltonian can, of course, be recovered from the
vector field (up to an additive constant).
Thus the not-so-simple Mumford formula~\eqref{Eq:Mumford}
turns out to be encoded in the strikingly simple expression~\eqref{Eq:field}.

\subsubsection{Characteristic classes}
\label{Sssec:charclasses}

Moduli spaces of $r$-spin curves $\MMMbar_{g,n}^{r,\pmb m}$
and of $r$-spin maps $X_{g,n,D}^{r,\pmb m}$ will be
introduced in Section~\ref{Sec:rspin}.
Essentially they classify stable
curves or maps enriched with an
$r$-spin structure in the sense of Witten:
in the case of a smooth curve with no markings,
an $r$-spin structure is a line bundle $\Lcal$, which is an
$r$th tensor root of the canonical line bundle on the curve.

These moduli spaces come with universal curves
$\pi: \CCCbar_{g,n}^{r,\pmb m} \to \MMMbar_{g,n}^{r,\pmb m}$ and
$\pi: \CCCbar_{g,n,D}^{r,\pmb m}(X) \to X_{g,n,D}^{r,\pmb m}$.
The universal curve has $n$ sections
$s_1, \dots s_n$ specifying the marked points and carries a
universal $r$-spin structure $\Lcal_{g,n}^{r,\pmb m}$
(which is either a line bundle or a sheaf, depending on
the construction – see Section~\ref{Sec:rspin}).
Let $\omega$ be the relative dualizing sheaf of the
universal curve and $\omega_{\log}$ the relative
dualizing sheaf twisted by the divisor
of the sections $\sum [s_i]$.

On the moduli space $\MMMbar_{g,n}^{r,\pmb m}$ or
$X_{g,n,D}^{r,\pmb m}$
we consider the following cohomology classes:
\begin{align*}
&\psi_i=c_1(s_i^*\omega)
,\\
&\kappa_d=\pi_*(c_1(\omega_{\log})^{d+1})
,\\
&\ch_d=\ch_d(R\pi_*\Lcal_{g,n}^{r,\pmb m})
, 
\end{align*}
where $\ch_d$ denotes the term in degree
$d$ of the Chern character.



\begin{rem} The classes $\psi_i$ and $\kappa_d$ are
the pullbacks of the analogous classes in the Chow ring of the
stack $\MMMbar_{g,n}$ of stable curves (respectively, the
stack $X_{g,n,D}$ of stable maps) via the natural morphism
$\MMMbar_{g,n}^{r,\pmb m}\to \MMMbar_{g,n}$
(respectively $X_{g,n,D}^{r,\pmb m} \to X_{g,n,D}$).
\end{rem}



\subsection{The main result}
\label{Ssec:mainresult}

Let $X$ be a target K\"{a}hler manifold and $(h_\mu)$ a basis
in $H^*(X, \QQ)$ (as before, we assume that $X$ has only even
cohomology).

Denote by $\big[ X_{g,n,D}^{r,\pmb m} \big]^v$
the virtual $r$-spin class of $X_{g,n,D}^{r, \pmb m}$
(see Section~\ref{Sec:moduli} for more details). Let
$\ev_i\colon X_{g,n,D}^{r, \pmb m} \rightarrow X$ be
the evaluation maps.

Let $B_d(x)$ be the Bernoulli polynomials. They are defined by
$$
\frac{e^{tx} \, t}{e^t-1} = \sum \frac{B_d(x)}{d!} t^d.
$$

\begin{defn}\label{defn:twisted_rspin}
The power series
$$
F_{g,r}({\pmb t},{\pmb s})=\\
\sum_D Q^D \sum_{n\ge 1}\;\; \frac{1}{n!} \,
\sum_{
\substack{m_1, \dots, m_n \\ a_1, \dots, a_n\\ \mu_1, \dots, \mu_n}
} \; \frac{1}{r^{g-1}}\!\!\!
\int\limits_{\big[X_{g,n,d}^{r,\pmb m}\big]^v} \!\!\!\!
\exp\biggl(\sum_{d\ge 1} s_d \, \ch_d \biggr) \;
\prod_{i=1}^n \psi_i^{a_i} \, \ev_i^*(h_{\mu_i}) \;
\prod_{i=1}^n t_{a_i}^{\mu_i \otimes m_i}.
$$
is called the {\em genus-$g$ $r$-spin twisted Gromov--Witten potential}
of~$X$.
\end{defn}

Let
$F_r({\pmb t},{\pmb s})=
\sum\limits_{g\ge 0}\hbar^{g-1}F_{g,r}({\pmb t},{\pmb s})$
and $Z_r=\exp(F_r).$

Consider the vector space $\QQ^{r-1}$ with basis $e_1, \dots, e_{r-1}$
endowed with the quadratic form
$g$ with coefficients $g_{ab} = \delta_{a+b,r}$.
A diagonal matrix
in basis $e_1, \dots, e_{r-1}$ will be denoted by
$\mbox{diag}[u_1, \dots, u_{r-1}]$.
Denote by $H$ the space $H = H^*(X,\QQ) \otimes \QQ^{r-1}$.
The metric on~$H$ is
the tensor product of the Poincar\'{e} paring on $H^*(X,\QQ)$ and the
metric~$g_{ab}$ on $\QQ^{r-1}$.

Let $\HH = H((\frac1z))$ as in \S\ref{Sssec:Givental}.

\begin{thm} \label{Thm:main}
We have
$$
\frac{\partial Z_r}{\partial s_d} = L_d Z_r,
$$
where $L_d$ is a differential operator in the variables $\pmb t$
obtained by Weyl quantization of the infinitesimal symplectic
transformation of~$\HH$ given by
$$
\frac{z^d}{(d+1)!} \cdot  {\rm Id} \otimes
\mbox{\rm diag} \left[
B_{d+1} \left( \frac1r \right), \dots, B_{d+1} \left( \frac{r-1}r \right)
\right].
$$
\end{thm}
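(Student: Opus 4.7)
The plan is to mirror the Faber--Pandharipande/Givental strategy, replacing Mumford's GRR computation of $\ch_d(\Lambda)$ by an analogous computation for $\ch_d(R\pi_*\Lcal_{g,n}^{r,\pmb m})$ on the universal $r$-spin curve, and then translating the resulting tautological formula into the quantization language set up in \S\ref{Sssec:Givental}.

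First I would apply Grothendieck--Riemann--Roch to the projection $\pi\colon\CCCbar_{g,n}^{r,\pmb m}\to\MMMbar_{g,n}^{r,\pmb m}$ and the universal spinor $\Lcal$, using that the relative tangent sheaf is $\omega^{-1}$, so
$$
\ch(R\pi_*\Lcal)=\pi_*\!\left(e^{c_1(\Lcal)}\,\td(\omega^{-1})\right).
$$
The defining relation $\Lcal^{\otimes r}\cong\omega_{\log}(-\sum m_i[s_i])$ (with corrections at the nodes) expresses $c_1(\Lcal)$ as a $\QQ$-linear combination of $c_1(\omega)$, the section classes $[s_i]$, and the boundary divisors; combined with the Bernoulli generating function identity $te^{tx}/(e^t-1)=\sum B_d(x)\,t^d/d!$, this packages $e^{c_1(\Lcal)}\td(\omega^{-1})$ so that after pushforward by $\pi$ one obtains an $r$-spin analog of \eqref{Eq:Mumford}: Bernoulli numbers are replaced by $B_{d+1}(m_i/r)$ in front of $\psi_i^d$ at markings, and a new sum over $k=1,\dots,r-1$ of boundary terms weighted by $B_{d+1}(k/r)$ appears, reflecting the possible twists at nodes.

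Second I would differentiate $Z_r$ with respect to $s_d$, which brings down an insertion of $\ch_d$ into every integrand. Substituting the tautological formula above, each of its three types of summands translates into a piece of a differential operator: the $\psi_i^d$-insertion at a marking of twist $m_i$ becomes the operator raising $a_i$ by $d$ in the variable $t_{a_i}^{\mu_i\otimes m_i}$ (weighted by $B_{d+1}(m_i/r)/(d+1)!$); the $\ka_d$-type contribution is absorbed via the dilaton shift; and the boundary insertion, by the degeneration/splitting axiom for the $r$-spin virtual class $\big[X_{g,n,D}^{r,\pmb m}\big]^v$ together with the fact that gluing two branches at a node requires the twists to sum to $r$, produces a quadratic second-derivative term in which the pairing $g_{ab}=\delta_{a+b,r}$ on $\QQ^{r-1}$ appears precisely because of this twist-summation constraint.

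Third I would repackage these three contributions as the Weyl quantization of the claimed infinitesimal symplectic transformation. On $\HH$ the multiplication by $z^d$ acts as a degree shift that mixes the $q$- and $p$-Darboux blocks exactly in the combinatorial pattern produced by the tautological formula, while the diagonal matrix $\mbox{diag}[B_{d+1}(1/r),\dots,B_{d+1}((r-1)/r)]$ weights each twist sector by the corresponding Bernoulli polynomial value. Using the vector-field-to-Hamiltonian dictionary of \S\ref{Sssec:Givental}, together with the symmetry $B_{d+1}(1-x)=(-1)^{d+1}B_{d+1}(x)$ needed to match the $(-1)^{a'}$ signs in the $pp$-block to the involution $k\leftrightarrow r-k$ at nodes, gives the compact expression in Theorem~\ref{Thm:main}. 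The principal obstacle is the first step: the sheaf $\Lcal$ is a genuine line bundle only on the orbifold model of the universal curve and over the nodal locus it carries ghost contributions whose Chern-character terms must be tracked carefully, and the overall combinatorial bookkeeping (the prefactor $r^{g-1}$ in Definition~\ref{defn:twisted_rspin}, the degrees of the boundary gluing maps, and the correct identification of the twisted Poincar\'e pairing) must be checked to match the conventions of $\HH$.
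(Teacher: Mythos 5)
Your plan reproduces the paper's three-stage strategy quite faithfully: (i) a Grothendieck--Riemann--Roch computation giving the $r$-spin analogue of Mumford's formula, with $B_{d+1}(m_i/r)$ at the markings and $B_{d+1}(q/r)$-weighted boundary terms at the nodes (Proposition~\ref{pro:GRR}); (ii) translation of the three types of tautological terms into the pieces of a differential operator, using the forgetting and factorization properties of the virtual $r$-spin class so that the $\kappa_d$-term is absorbed by the dilaton shift, the $\psi_i^d$-term gives the index-raising first-order piece, and the node-gluing term gives a second-order piece carrying the pairing $\delta_{m+m',r}$ (Propositions~\ref{pro:propvirt} and~\ref{pro:diffop}); (iii) identification of $L_d$ as the Weyl quantization of a pure-degree-$2$ Hamiltonian whose linear vector field is multiplication by the stated diagonal $z^d$-matrix. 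Your observation that $B_{d+1}(1-x)=(-1)^{d+1}B_{d+1}(x)$ underlies the symmetry of the $pp$-block and the matching of signs is also correct, although the paper leaves it implicit.

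The one place where your plan departs from the paper is the GRR step. You propose to compute $\pi_*\bigl(e^{c_1(\Lcal)}\td(\omega^{-1})\bigr)$ directly on the stack-theoretic ($r$-stable) universal curve and then track the ``ghost'' corrections at the nodes; this is an orbifold GRR computation. The paper instead passes to the universal \emph{$r$-bubbled} curve, whose fibres are honest semistable schemes, so that the classical GRR formula applies without modification; the node contributions then arise from the selfintersections of the bubble chains rather than from twisted sectors (see the sketch of Proposition~\ref{pro:GRR} and Remark~\ref{rem:samech}, which shows $R\pi_*\Lcal$ is the same for both models). Both routes should work, but the bubbled-curve route is what the paper actually carries out and avoids invoking an orbifold Riemann--Roch theorem. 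You should also be aware that to intersect the resulting GRR identity with the virtual $r$-spin class one needs to embed $X_{g,n,D}^{r,\pmb m}$ into a nonsingular Deligne--Mumford stack carrying a maximal-variation family and take refined Gysin pullbacks (Proposition~\ref{pro:GRRonX}); your appeal to the ``degeneration/splitting axiom'' points in the right direction but elides this technical step.
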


\begin{rem}\label{rem:main}
This theorem actually gives a way of computing the twisted
$r$-spin potential of~$X$ starting from the ordinary
$r$-spin potential of~$X$, not twisted by
the classes $\ch_d$.

More precisely, let $f_r = F_r|_{s_1 = s_2 = \dots = 0}$ be
the ordinary $r$-spin Gromov--Witten potential of~$X$. Then we have
$$
\exp(F_r) = \exp\left(\sum s_d L_d\right) \exp(f_r).
$$
\end{rem}

\subsection{Plan of the paper}
In Section~\ref{Sec:rspin} we give an overview of the
existing constructions of the natural compactification of
spaces of $r$-spin curves.
In Section~\ref{Sec:moduli} we explain in more detail
the properties of the moduli spaces of $r$-stable spin
maps and the virtual fundamental classes.
In Section~\ref{Sec:GRR} we recall and slightly generalize the results
obtained in~\cite{ch} by applying the Grothendieck--Riemann--Roch
formula to the spinor bundle on the so-called universal $r$-bubbled curve.
In Section~\ref{Sec:quant}
we put these results in the framework of Givental's quantization.

\section{Moduli of $r$-spin curves: an overview}
\label{Sec:rspin}

Here we describe the construction of the moduli
spaces of $r$-stable spin curves and maps.
The integer $r \geq 2$ is fixed once and for all.
We denote by $\ZZ_r$ the group of $r$th roots of unity
and set $\xi_r = \exp(2 \pi i/r)$.

All schemes and stacks throughout this paper are defined over~$\CC$.

\subsection{Smooth curves}
Let $(C;s_1, \dots, s_n)$ be a smooth genus-$g$ curve with
$n \geq 1$ marked points. Choose $n$ integers
$m_1, \dots, m_n \in \ZZ$ such that
$2g-2+n - \sum m_i$ is divisible by~$r$. An {\em $r$-spin
structure of type $\pmb m = (m_1, \dots, m_n)$} on~$C$ is a line
bundle $\Lcal$ over~$C$ together with an identification
$$
\fie\colon \Lcal^{\otimes r} \xrightarrow{ \ \sim\ }
\omega_{\log}(-\textsum_{i=1}^n m_i[s_i]),
$$
where $\omega$ is the cotangent line bundle of~$C$
and $\omega_{\log} = \omega(\sum [s_i])$.

The number $m_i$ is called the {\em index} of $\Lcal$ at~$x_i$.

There are exactly $r^{2g}$ nonisomorphic $r$-spin structures
of type~$\pmb m$ on every smooth curve. Each of them has~$r$
``trivial'' automorphisms $\Lcal \rightarrow \Lcal$
given by the multiplications by $r$th roots of unity
along its fibers. A curve endowed with an $r$-spin structure is called
a {\em smooth $r$-spin curve}.

The moduli space $\MMM_{g,n}^{r,\pmb m}$ of smooth
$r$-spin curves is an $r^{2g}$-sheeted unramified covering
of the moduli space of smooth curves $\MMM_{g,n}$ in the sense
that its fibre is constant and consists of $r^{2g}$ copies of the same
$0$-dimensional stack. However,
since each point of the fibre is equipped with the
$r$ ``trivial'' automorphisms mentioned above,
the forgetful map $\MMM_{g,n}^{r,\pmb m} \rightarrow \MMM_{g,n}$
is of degree~$r^{2g-1}$.

\begin{rem} \label{rem:shift}
Let $\pmb m = (m_1, \dots, m_n)$ and
$\pmb{m'} = (m_1, \dots, m_i+r, \dots, m_n)$.
There is a natural isomorphism between $\MMM_{g,n}^{r, \pmb m}$
and $\MMM_{g,n}^{r, \pmb{m'}}$. Therefore, from now on, we will always
choose $m_1, \dots, m_n$ in $\{1, \dots, r\}$.
\end{rem}

\subsection{Curves with nodes}
\label{Ssec:3curves}

There exists a natural compactification
$\MMMbar_{g,n}^{r, \pmb m}$ of the space
$\MMM_{g,n}^{r, \pmb m}$. This
compactification can be constructed in three different (but equivalent)
ways, and there are, accordingly, three different versions
of the universal curve over the compactified moduli space
(see Figure~\ref{Fig:3versions}).
We are going to describe the behavior of the universal
curve at the neighborhood of a node in all three versions.

In the universal curve $\pi:\CCCbar_{g,n} \rightarrow \MMMbar_{g,n}$
over the space of stable curves, from a local point of view,
there is a unique type of node.
The local picture of~$\pi$ at the neighborhood of a node is given by
$(x,y) \mapsto t = xy$. However, in the universal curve
over the space of $r$-stable curves, there are $r$ different
types of nodes: they are distinguished by assigning
to the branches of the curve at the node two integers
$a,b \in\{1, \dots, r \}$ such that either $a=b=r$
or $a+b=r$, see below.

\paragraph{1) coarse $r$-spin curves.} Coarse (or scheme-theoretic)
$r$-spin curves were introduced by Jarvis~\cite{Ja_comp,Ja_geom} using
relatively torsion-free sheaves.

The local picture of the universal coarse $r$-spin curve at
the neighborhood of
a node is given by the equation $xy=t^r$, where $t$ is
a local coordinate on the moduli space. Thus the universal
curve has an $A_{r-1}$ singularity at $x=y=t=0$.

If $a=b=r$, then locally, at the neighborhood of
the node, $\Lcal$ is a line bundle endowed with an isomorphism
$$
\Lcal^{\otimes r} \xrightarrow{ \ \sim\ }
\omega_{\log}.
$$
(There is no need to twist $\omega_{\log}$ by a divisor
of marked points since we are working at the neighborhood
of the node.)

If $a+b=r$, then $\Lcal$ is no longer a line bundle, but a
(rank-$1$ torsion-free) sheaf generated by two elements
$\eta$ and $\xi$, modulo the relations $y\xi = t^a \eta$,
$x\eta = t^b \xi$. There is a map from $\Lcal^{\otimes r}$ to
the sheaf of sections of $\omega_{\log}$ given by
$$
\xi^i \eta^j \mapsto t^{jb} x^{a-j} \frac{dx}{x}
=  - t^{ia} y^{b-i} \frac{dy}{y},
$$
for $i+j=r$. This map is, of course, not an isomorphism.

A stable curve endowed with a sheaf~$\Lcal$ like that will be called
a {\em stable coarse $r$-spin curve}.
We often need to generalize this to stable maps from $C$ to a K\"{a}hler
manifold $X$. In this case the source curve is a prestable curve,
which is not
necessarily stable. When $C$ is equipped with a sheaf~$\Lcal$ as above, we say
that $C$ is a {\em prestable coarse $r$-spin curve}.

\begin{figure}
\begin{center}
\
\includegraphics{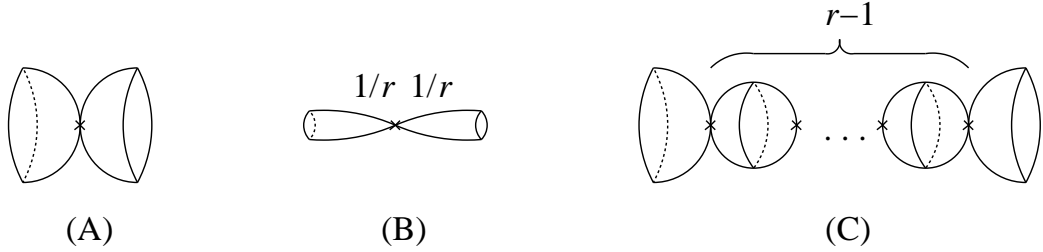}
\caption{(A) a stable curve, (B) an $r$-stable curve, (C)
an $r$-bubbled curve.}
\label{Fig:3versions}
\end{center}
\end{figure}

\paragraph{2) $r$-bubbled spin curves.} The previous
construction was improved in~\cite{CCC}. The idea is to
transform every stable curve with at least one node into
a semistable curve. One obtains
that, on the semistable curves, it is enough to consider
locally free sheaves.

We provide a precise description as follows.
Consider
the resolution of the $A_{r-1}$ singularity at the
origin of $\{xy=t^r\}$ by $\lfloor r/2 \rfloor$ iterated blowups.
One obtains a smooth universal
curve, where every node is replaced by a chain of $r-1$
complex projective lines.

On this universal curve, $\Lcal$ is a line bundle, and we have
a map
$$
\Lcal^{\otimes r} \rightarrow
\omega_{\log}(-\textsum_{i=1}^n m_i[s_i]).
$$
This map is {\em not} an isomorphism. Indeed, its vanishing divisor
is a weighted sum of the projective lines that form the
$(r-1)$-chain, with coefficients
$$
a,\quad 2a, \quad \dots, \quad (b-1)a,\quad
 ba,\quad  b(a-1), \quad \dots, \quad 2b,\quad  b,
$$
for $a,b \in \{1, \dots, r-1\}$ and $a+b=r$.

In this paper, a curve like that will be called
an {\em $r$-bubbled curve}. When endowed with the
line bundle $\Lcal$ as above, it will be called
an {\em $r$-bubbled spin curve}.

\paragraph{3) $r$-stable spin curves.}
Another way to improve the initial construction was given in~\cite{AJ}
and in~\cite{chstab}.
A stack structure at the neighborhood of every node
of every singular curve is introduced. Note that the
surface $xy=t^r$ is the quotient of the smooth surface
$XY=t$ by the action of the group $\ZZ_r = \ZZ/r\ZZ$
via $X \mapsto \xi_r X$, $Y \mapsto \xi_r^{-1} Y$. In
other words, the surface $xy=t^r$ is actually the coarse space
of a smooth stack. Over this smooth stack, $\Lcal$ is a
line bundle and
$$
\Lcal^{\otimes r} \rightarrow
\omega_{\log}(-\textsum_{i=1}^n m_i[s_i])
$$
is a global isomorphism. These stacks are
special cases of Abramovich and Vistoli's
stack-theoretic curves: stacks with stabilizers of arbitrary order
at the nodes and at the markings (they are called ``twisted curves'' in
\cite{AV}).

The fibers of this version of the universal curve are
stable curves endowed with a nontrivial
stack structure at the nodes. They are called {\em $r$-stable
curves} and the {\em $r$-prestable curves} are defined analogously.
The neighborhood of a
node in an $r$-stable curve is isomorphic to an ordinary node
$XY=0$ endowed with the group action of $\ZZ_r$ via
$X \mapsto \xi_r X$, $Y \mapsto \xi_r^{-1} Y$.
When endowed with the line bundle~$\Lcal$ as above, the
curve is called an {\em $r$-(pre)stable spin curve}.

An $r$-stable curve with $k$ nodes has exactly $r^k$
times as many automorphisms as the associated coarse
stable curve~\cite[Thm.~7.1.1]{ACV}.

In this picture, the numbers $a$ and $b$
result from the
 $\ZZ_r$-action involved in the local picture of $\Lcal$
at the two branches.
More precisely, locally on
the $\ZZ_r$-space
$$\{XY=0\} \text{ with $\ZZ_r$-action } (X,Y)\mapsto (\xi_r X,\xi_r^{-1}Y),$$
the total space of $\Lcal$ is the $\ZZ_r$-space
$$\{(X,Y,T)\mid XY=0\} \text{ with $\ZZ_r$-action }(X,Y,T)\mapsto
(\xi_r X, \xi_r^{-1}Y, \xi_r^aT).$$
The index $b$ in $\{1,\dots, r\}$ is determined
by $a+b\equiv 0 \mod r$ or by interchanging
$X$ with $Y$ in the local picture.

\bigskip

\paragraph{On the three constructions.}
Among the three constructions, the third one (involving $r$-stable
curves) is best fit for
constructing the compactification. Indeed,
 $\MMMbar_{g,n}^{r,\pmb m}$ is just the solution of the corresponding
moduli problem. On the other hand, the objects defined in the
first two constructions, carry less information.
In these definitions, it can happen that
two $r$-spin curves lying over two distinct points of
the boundary of $\MMMbar_{g,n}^{r,\pmb m}$
are isomorphic.
Thus in these constructions, the moduli functor
yields a singular stack, which needs to be normalized if we want to
obtain $\MMMbar_{g,n}^{r,\pmb m}$.
(The authors in \cite{Ja_comp} and \cite{CCC} addressed this issue:
the moduli functor of $r$-spin structures should be refined
in an appropriate way \cite{Ja_geom} \cite[(a,b) p.26]{CCC},
but we do not discuss this here.)

On the other hand,
if we wish to apply the Grothendieck--Riemann--Roch (GRR) formula,
it turns out that it is more straightforward
to work with the second construction
involving $r$-bubbled curves. This happens because
the GRR formula applies without
modifications to morphisms between stacks only if the fibers
of the morphisms are schemes, which is not true for $r$-stable curves.

In this paper we will talk about spin structures on
$r$-(pre)stable curves when
describing moduli spaces and morphisms between them,
but we will switch to $r$-bubbled curves when we want to
apply GRR and calculate the Chern character of the
$K$-theoretical direct image of the $r$-spin
structure. The following remark illustrates why this
direct image coincides with the direct image
via the $r$-(pre)stable curve.

\begin{rem}
\label{rem:samech}
There are the following morphisms between universal curves:
$$
p'\colon \CCCbar^{\rm bubble} \rightarrow \CCCbar^{\rm coarse}
\qquad \mbox{and} \qquad
p''\colon \CCCbar^{\rm stacky} \rightarrow \CCCbar^{\rm coarse}.
$$
Moreover, we have $\Lcal^{\rm coarse} = p'_* \Lcal^{\rm bubble} =
p''_* \Lcal^{\rm stacky}.$
\end{rem}

\subsection{Boundary}\label{Ssect:Boundary}

The structure of the boundary of $\MMMbar_{g,n}^{r,\pmb m}$
is similar to that of $\MMMbar_{g,n}$, but there is a new
subtlety.

\paragraph{Notation.} By $[n]$, we denote the set $\{1,\dots, n\}$.
Let $I\subset[n]$. For any multiindex $\pmb m=(m_1, \dots, m_n)$,
we denote by $\pmb m_I$ the multiindex $(m_i)_{i\in I}$.


For any nonnegative integers $n$ and $g$
we define the following involutions.\\
\noindent (1) Given $l\in \{0,\dots,g\}$, we write $l'$ for $g-l$.\\
\noindent (2) Given a
subset $I \subset [n]$ we write $I'$ for $[n] \setminus I$.\\
\noindent (3) Given $q\in \{1,\dots, r\}$ we write $q'$ for the
integer in $\{1,\dots, r\}$ satisfying
$q+q'\in r\ZZ$.

\bigskip

The boundary $\partial \MMMbar_{g,n}^{r,\pmb m}$ of
$\MMMbar_{g,n}^{r,\pmb m}$ is the moduli space of
{\em singular} $r$-stable spin curves.
The normalization $N(\partial \MMMbar_{g,n}^{r,\pmb m})$
of the boundary is the moduli space of pairs
$(C, \mbox{node of } C)$, where $C$ is a singular $r$-stable spin curve.
Finally, we consider a double cover
$\Dcal$ of the normalization, namely, the moduli space
of triples $(C, \mbox{node of } C,
\mbox{branch of } C \mbox{ at the node})$.

While in the case of moduli spaces of stable curves, the
space $\Dcal$ turned out to be a disjoint union
of several smaller moduli spaces, the picture here is more
complicated: the space $\Dcal$ is not isomorphic, but can
be projected to a disjoint union
of smaller moduli spaces.

The stack $\Dcal$ is naturally equipped with two line bundles
whose fibres are the cotangent lines to the chosen branch of the
{\em coarse} stable curve and to the other branch.
We write
$$
\psi,\psi' \in H^2( \Dcal, \QQ)
$$
for their respective
first Chern classes. Note that in this notation, we privilege the
coarse curve, because in this way the classes $\psi$
and $\psi'$ are more easily related to the
classes $\psi_i$ introduced in \S\ref{Sssec:charclasses}.

Recall that the
 spinor bundle $\Lcal$ determines local indices $a$ and $b$ in $\{1,\dots, r\}$
 satisfying $a+b\equiv 0 \mod r$ in one-to-one correspondence with
 the branches of the node. Therefore, to
each point $(C, \mbox{node of } C, \mbox{branch of } C \mbox{ at the
node})$ we associate an index $q\in \{1, \dots, r\}$ by setting either
$q=a$ or $q=b$ depending on the branch.

We can decompose $\Dcal$
according to the topological type of the node
and the index of the spinor bundle~$\Lcal$ at the
chosen branch:
\begin{equation}\label{eq:decompD}
\Dcal = \bigsqcup_{\substack{0\le l \le g\\
I\subseteq [n]}} \Dcal_{l,I}\sqcup \bigsqcup _{1\le q\le r}
\Dcal ^q_{\irr}.
\end{equation}
A point of $\Dcal_{l,I}$ corresponds to a curve with a node
that divides it into a component of genus~$l$ with marking set~$I$
and a component of genus~$l'$ with marking set~$I'$. A point
of $\Dcal^q_{\irr}$ corresponds to a curve with a nonseparating
node, the index of the spinor bundle at the chosen
branch being~$q$. We have the natural morphisms
$j_{l,I}\colon \Dcal_{l, I}\longrightarrow
\MMMbar_{g,n}^{r,\pmb m}$
and $j_{\irr}^q\colon \Dcal_{\irr}^q
\longrightarrow \MMMbar_{g,n}^{r,\pmb m}$.

\begin{rem}
Over $\Dcal_{l,I}$ the
multiplicity index $q\in\{1,\dots, r\}$  is constant and satisfies
\begin{equation}\label{eq:qlI}
2l-1 -\textsum_I (m_i-1)\equiv q \mod r.\end{equation}
We denote this index by $q(l,I)$.
On the other hand, on $\Dcal_{\irr}^q$ the index $q$
is constant by definition.
Let us set $\Dcal_{\irr}=\bigsqcup _q \Dcal_{\irr}^q$.
\end{rem}

There exist natural morphisms from $\Dcal_{l, I}$ and $\Dcal_{\irr}^q$
to the moduli of $r$-stable
spin curves of smaller dimension (see~\cite{JKV}).

\[
\xymatrix{
  &\Dcal_{l,I}\ar[ld]_{\mu_{l,I}}\ar[rd]^{j_{l,I}}&&
l\in \{0, \dots, g\},\ I\subseteq [n],\ q=q(l,I)\\
\MMMbar_{l,\abs{I}+1}^{r,(\pmb m_I,q)}\times
\MMMbar_{l',\abs{I'}+1}^{r,(\pmb m_{I'},q')}&&\MMMbar_{g,n}^{r,\pmb m}&\\
  &\Dcal^q_{\irr}\ar[ld]_{\mu^q_{\irr}}\ar[rd]^{j_{\irr}^q}&&
q\in \{1, \dots, r\}\\
 \MMMbar_{g-1,n+2}^{r,(\pmb m,q,q')}&&
\MMMbar_{g,n}^{r,\pmb m}
}
\]

These morphisms are obtained by
\begin{enumerate}
\item
normalizing the curve at the
node and taking the pullback of the spinor bundle to the
normalization,
\item ``forgetting'' the orbifold structure at
the two new marked points (this is the same as passing to the
coarse space, but only locally), and
\item replacing the spinor
bundle~$\Lcal$ by the sheaf of its invariant sections (this
sheaf turns out to be locally free at the two new marked points).
\end{enumerate}

\begin{figure}[h]
\begin{center}
\
\includegraphics{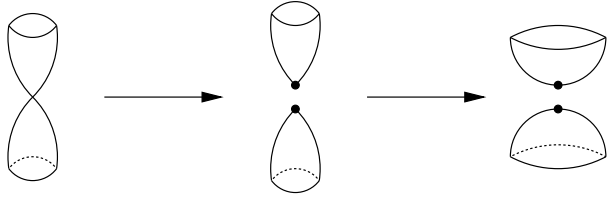}

\caption{The morphisms $\mu$.}
\label{fig:tetons}
\end{center}
\end{figure}

\begin{rem}
When $q=r$,
we actually obtain two $r$-spin structures of types
$(\pmb m_I,0)$ and $(\pmb m_{I'},0)$.
Indeed, $q=r$ is the case where, locally at the node,
the line bundles
are pullbacks of $r$th roots of $\omega_{\log}$ on the coarse space.
If we want the indices $m_i$ to lie in the set $\{1, \dots,r \}$ as usual,
we must further compose the functor $\mu_{l,I}$ with the canonical
isomorphisms recalled in Remark~\ref{rem:shift}
shifting by $r$ the two multiindices
at the two new markings.
\end{rem}

We set $\mu_{\irr}=\bigsqcup _q \mu_{\irr}^q$.

Let us prove that the degree of $\mu_{l,I}$ and $\mu_{\irr}$ equals~1.
Assume, for simplicity that we are in the case where
the generic curve has trivial automorphism group. Set $d = GCD(q,r)$.
Then we claim the following:

{\em A generic geometric point in the image of $\mu_{l,I}$
has a stabilizer of order~$r^2$. It has one geometric preimage
with stabilizer of order~$r^2$. The degree of $\mu_{l,I}$ equals~$1$.

A generic geometric point in the image of $\mu_{\irr}$
has a stabilizer of order~$r$. It has~$d$ geometric preimages
with stabilizers of order~$rd$. The degree of $\mu_{\irr}$ equals~$1$.}

\setlength{\unitlength}{1em}
\begin{center}
\
\begin{picture}(31,5)
\put(0,1){\includegraphics{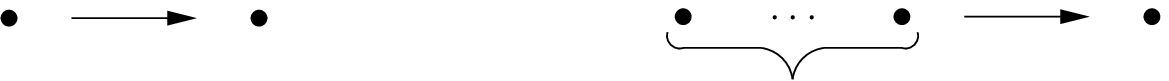}}
\put(-0.1,4){$\frac1{r^2}$}
\put(6.5,4){$\frac1{r^2}$}
\put(17.6,4){$\frac1{rd}$}
\put(23.4,4){$\frac1{rd}$}
\put(30.1,4){$\frac1{r}$}
\put(20.8,0){$d$}
\put(2.7,3.3){$\mu_{l,I}$}
\put(26.1,3.3){$\mu_{\irr}$}
\end{picture}
\end{center}

For both morphisms,
we consider a generic point in the image and work out the fiber over it.

For $\mu_{l,I}$, the point in the image is a pair formed by  a
smooth genus-$l$ $(\abs{I}+1)$-marked curve $C_1$
and a smooth genus-$l'$ $(\abs{I'}+1)$-marked curve $C_2$,
each with trivial automorphism group,
and each equipped with spinor bundles $L_1$ and $L_2$. Then,
due to the ``trivial'' automorphisms acting by multiplication
along the fibres of $L_1$
and---independently---along the fibres of $L_2$,
the point has a stabilizer of order $r^2$.
The geometric points of the fibre are  ``gluings'' of $L_1\to C_1$ and
$L_2\to C_2$; that is to say:
$r$-stable spin curves yielding $L_1\to C_1$ and
$L_2\to C_2$ when we apply the procedures  (1-3) listed above.
There is only one coarse curve obtained from identifying the $(\abs{I}+1)$st
point of $C_1$ to the $(\abs{I'}+1)$st of $C_2$
and, more importantly, there is only one $r$-stable
curve $C$ over it (see for example \cite[Lem.~5.3]{Olsson}).
Furthermore, the fact that the node is separating implies that
there is only one line bundle $L$ up to isomorphism gluing
$L_1$ and $L_2$. Therefore the fibre of $\mu_{l,I}$ contains a single point.
We now show that its stabilizer has order $r^2$ as desired.
As mentioned above, the order of $\Aut(C,\text{markings})$ equals $r$,
because of the presence of a node~\cite[Thm.~7.1.1]{ACV}.
By pulling back the spinor bundle $L$ via any of these
automorphisms, we get another
spinor bundle gluing $L_1$ and $L_2$.
We just showed that, up to isomorphism, there exists only one
such gluing; therefore, we conclude that each automorphism of the
$r$-stable curve lifts to an automorphism of the $r$-stable spin
curve.
Furthermore, the $r$-stable spin curve has
exactly $r$ times as many automorphism as
the $r$-stable curve, because the morphisms lifting the identity of $C$ are
exactly
$r$ ``trivial'' automorphisms acting by
multiplication on the fibres of~$L$.

For $\mu_{\irr}^q$ a generic point in the image is a spinor bundle $L_0$
over a smooth genus-$(g-1)$
$(n+2)$-marked $r$-stable curve $C_0$ satisfying
$\Aut(C_0,\text{markings})=1$. Its automorphism group has order $r$: it
consists of the ``trivial'' automorphisms acting by multiplication
on the fibres of~$L_0$. The geometric points of the fibres are ``gluings'':
$r$-stable spin curves yielding $C_0$ and $L_0$ through the procedures (1-3).
Again, and for the same reasons as above, there is only one $r$-stable curve
$C$ yielding $C_0$ after normalization and passage to the coarse space.
However, since the node is nonseparating, there are exactly
$r$ spinor bundles gluing $L_0$ on the nodal curve $C$.
Let us first show that on $C$
there are exactly $r$ gluings of the trivial bundle on $C_0$:
these are the sheaves $T(0), T(1), \dots, T(r-1)$ of
regular functions $f\colon C_0\to \CC$ satisfying a compatibility conditions
$f(s_{n+1})=\xi_r^i f(s_{n+2})$ at the
$(n+1)$st marking $s_{n+1}$ and at the $(n+2)$nd marking $s_{n+2}$ of $C_0$.
In fact, $\{T(0), T(1), \dots, T(r-1)\}$ is a cyclic group
generated by $T(1)$ and acting freely and transitively on
the gluings of $L_0\longrightarrow C_0$ on $C$;
therefore, once one of such gluings $L$ is fixed,
we can write them all as
$\{L\otimes T(0), L\otimes T(1),\dots, L\otimes T(r-1)\}$.
These line bundles are pairwise non-isomorphic over~$C$, \ie, an
isomorphism
\begin{center}
\setlength{\unitlength}{1em}
\begin{picture}(8,5)
\put(-2,4){$L\otimes T(i)$}
\put(6,4){$L\otimes T(j)$}
\put(3.5,0){$C$}
\put(2.9,4){$\xrightarrow{ \ \sim\ }$}
\put(1,3){\vector(1,-1){2}}
\put(7,3){\vector(-1,-1){2}}
\end{picture}
\end{center}
exists only if $i=j$.

The automorphisms of $(C,\text{markings})$,
form a cyclic group  of order $r$, because of the presence of
a node \cite[Thm.~7.1.1]{ACV}. Denote by~$\alpha$ a generator of this
group. Let $d = GCD(r,q)$.
In \cite[Prop.~2.5.3]{chstab}, the first author shows that there
exists an isomorphism
\begin{center}
\setlength{\unitlength}{1em}
\begin{picture}(8,5)
\put(-2,4){$L\otimes T(i)$}
\put(6,4){$L\otimes T(j)$}
\put(-.5,0){$C$}
\put(7.5,0){$C$}
\put(2.9,4){$\xrightarrow{ \ \sim\ }$}
\put(2.9,0){$\xrightarrow{ \ \sim\ }$}
\put(0,3){\vector(0,-1){1.8}}
\put(8,3){\vector(0,-1){1.8}}
\put(3.5,-.6){$\al$}
\end{picture}
\end{center}
if and only if $j=i+d$.

In this way these automorphisms identify
$r/d$ by $r/d$ the spinor bundles
$$
\{L\otimes T(0), L\otimes T(1),\dots, L\otimes T(r-1)\}.
$$
Furthermore, only the $d$ automorphisms $\al^0, \al^{\frac{r}{d}},
\dots, \al^{(d-1)\frac{r}{d}}$ lift to automorphisms
of the $r$-stable spin curve. As above, due to the ``trivial''
automorphism acting by multiplication on the fibres,
we observe that the automorphisms of the $r$-stable spin curve are
$r$ times as many as the automorphisms lifting from the $r$-stable curve.
In this way the fibre consists of $d$ distinct geometric points
with stabilizers of order $rd$.

\begin{rem}
We point out that $\mu_{l,I}$ and $\mu_{\irr}^q$
are not isomorphisms in general.
The discussion above makes it evident for $\mu_{\irr}^q$: in general,
the generic fibre contains more than one geometric point. A more detailed
analysis shows that also $\mu_{l,I}$ is not an isomorphism in
general\footnote{
We consider for simplicity the case $q(l,I)=r$,
but the discussion extends whenever $q=q(l,I)$ is not prime to $r$.
We claim that
$\mu_{l,I}$ is not fully faithful.
Take an automorphism acting by multiplication by two different factors on
the fibres of  $L_1$ and $L_2$ defined above.
Such an automorphism is not in the image
of the functor $\mu_{l,I}$.
After applying $\mu_{l,I}$ by following the procedures (1-3), we find
that the $r^2$ automorphisms
of the $r$-stable spin curve $L\rightarrow C$
only yield automorphisms acting by multiplication by the same factor
on $L_1$ and $L_2$.
Indeed, the case $q(l,I)=r$ implies that $L$ is a pullback
from the coarse space;
since each automorphism $\al\in \Aut(C, \text{markings})$ fix the coarse space,
the line bundle $\al^*L$ is equal---and not only isomorphic---to $L$.
In this way, all automorphisms lifting $\al$ to $L\to C$ yield the identity
on $L_0\to C_0$.}.
\end{rem}

The discussion above can be extended to the case of stable
maps, as we will see in the next section.

\section{Spaces of spin maps, virtual
classes, and twisted potentials}
\label{Sec:moduli}

The space $X_{g,n,D}^{r, \pmb m} = \MMMbar_{g,n,D}^{r, \pmb m}(X)$
is the {\em moduli space of $r$-stable spin maps of degree $D$}: these
are maps
from $r$-prestable spin curves to the  K\"{a}hler
manifold $X$ whose image cycle is rationally equivalent to $D$.
The stack $X_{g,n,D}^{r, \pmb m}$ is equipped with a universal
$r$-prestable curve,
whose coarse space is the universal coarse prestable curve, in which
$A_{r-1}$ singularities appear.
The desingularization of
this coarse curve is the universal $r$-bubbled curve, which
is well suited for GRR calculations.

\subsection{Moduli of $r$-stable spin maps}\label{Sssect:rstablespinmap}
Let $X$ be a K\"{a}hler manifold. Fix $r\ge 2$, two positive integers
$g$ and $n$, and an effective cycle $D$ in $H_2(X,\ZZ)$.
By $X_{g,n,D}^{r,\pmb m}$, we denote the category of
$r$-stable spin maps. An object is given by the data
\[\xymatrix@R=1cm{
\mathcal L\ar[rd] &&\\
&C\ar[d]^\pi \ar[r]&X&& \fie\colon \mathcal L^{\otimes r}
\xrightarrow{\ \sim \ }\omega_{\log}(-\textsum_{i=1}^n m_i[s_i(B)]), \\
&B\ar@/^1.5cm/[u]^{s_1}_\cdots\ar@/^0.5cm/[u]^{s_n}
}\]
satisfying the following conditions.
\begin{enumerate}
\item $C\to B$ is an $r$-prestable curve.
More explicitly $C$ is a stack of relative dimension one
over a base scheme $B$, its smooth locus is represented by a scheme, and its
singularities are nodes with cyclic stabilizers of order $r$.
The local picture at the nodes
is given by $[\{XY=t\}/\ZZ_r]$, where $t$
is a local parameter on the base scheme
and the group $\ZZ_r$ acts by
$(X,Y,t)\mapsto (\xi_rX , \xi_r^{-1} Y,t)$.
\item the coarse
schemes and morphisms between schemes
corresponding to $C\to B$, $C\to X$, and $s_1, \dots, s_n\colon B\to C$
form a stable map of genus $g$ over $B$, of degree $D$,
and marked at $n$ distinct smooth points.
\item $\mathcal L$ is a line bundle on $C$ and $\fie$ sets an isomorphism
between $\mathcal L^{\otimes r}$ and
$\omega_{\log}(-\textsum_{i=1}^n m_i[s_i(B)])$.
\end{enumerate}
Morphisms are defined in the natural way. Since $C$ is a stack,
in order to obtain a category, we
need to consider $1$-morphisms
up to $2$-isomorphisms (this yields a $2$-category
equivalent to a category as detailed in \cite{AV}).

It follows immediately from \cite{AV}, \cite{Olsson}, and \cite{chstab} that
$X_{g,n,d}^{r,\pmb m}$ is a proper Deligne--Mumford stack.
Beside Kontsevich's construction of
the stack of stable maps, the
key fact is the existence of an algebraic stack of $r$-prestable
curves (a straightforward
consequence of Olsson's
description \cite{Olsson} of the stack of all Abramovich
and Vistoli's stack-theoretic curves).
We detail this as follows.
\begin{enumerate}
\item[(i)]
The functor retaining only the coarse
schemes and morphisms between schemes
corresponding to $C\to B$,  $C\to X$, and $s_1, \dots, s_n\colon B\to C$
lands on Kontsevich's proper stack $X_{g,n,D}$:
\begin{equation}\label{eq:forgetspin}
p\colon X_{g,n,D}^{r,\pmb m}\to X_{g,n,D}.
\end{equation}
\item[(ii)] Note that $X_{g,n,D}$ naturally maps to
$\mathfrak M_{g,n}$, the stack
of genus-$g$ $n$-pointed prestable curves.
The morphism \eqref{eq:forgetspin}, is the base change
via $p\colon X_{g,n,D}\to \mathfrak M_{g,n}$
of the proper morphism
\begin{equation}\label{eq:prestable_spin}
\mathfrak M_{g,n}^{r,\pmb m}\to \mathfrak M_{g,n}
\end{equation}
sending $r$-prestable curves equipped with an $r$-spin structure
$\mathcal L^{\otimes r}\cong\omega_{\log}(-\textsum_{i=1}^n m_i[s_i(B)])$
to the coarse prestable curves. The morphism \eqref{eq:prestable_spin}
is proper and represented by Deligne--Mumford stacks.
This can be regarded as a consequence of
\cite{Olsson}, showing that the stack of $r$-prestable
curves is proper over the stack of
prestable curves, and of \cite{chstab}, showing that the
functor of $r$th roots is proper over the stack of $r$-prestable curves.
\end{enumerate}
\begin{rem}
If $X$ is a point, we recover the stack
$\MMMbar_{g,n}^{r,\pmb m}$ and \eqref{eq:forgetspin}
is the morphism $\MMMbar_{g,n}^{r,\pmb m}\to \MMMbar_{g,n}$.
\end{rem}
\begin{rem}
Moduli of
stable maps equipped with $r$-spin structures were
introduced in \cite{JKVmaps} by means of
Jarvis's notion of coarse stable $r$-spin curve.
As mentioned above, with this technique
several technical points concerning the
singularities appearing in the
moduli stack and the stabilization morphisms
need to be addressed.
The use of Abramovich and Vistoli's stack-theoretic curves
simplifies the treatment of the
moduli stack and avoids dealing with singularities.
This approach to Gromov--Witten $r$-spin theory
via stack-theoretic curves  was  alluded
to in \cite[\S2.3 page 8]{JKV}. After Olsson \cite{Olsson},
this treatment becomes straightforward.
From the point of view of enumerative geometry,
the two approaches of \cite{JKVmaps}
or via stack-theoretic curves are equivalent; this follows immediately
from the case treated in~\cite{AJ},
which can be regarded as the case $X=\pt$ (see also \cite[\S4.3]{ch}).

An irreducible
component of $X^{r,\pmb m}_{g,n,D}$ whose generic points correspond
to singular curves is, in general, nonreduced. Indeed, such a
component is projected onto a ramification locus of the
morphism~\eqref{eq:prestable_spin}. (Another explanation:
following~\cite{AJ} one can realize $X^{r,\pmb m}_{g,n,D}$
as the moduli stack of stable maps to a {\em target stack}.
Such moduli stacks may well be nonreduced.)

\end{rem}

\subsection{The genus-$g$ $r$-spin twisted
Gromov--Witten potential of $X$}
The stack $X_{g,n,D}^{r,\pmb m}$ is equipped with a virtual $r$-spin class,
a homology class
playing the role of the virtual fundamental class in the standard
Gromov--Witten theory of stable maps.
This is the class used in the definition of
intersection numbers in Gromov--Witten $r$-spin theory.
We recall its definition as follows (we refer to
Definitions 4.7 and 5.4 
in \cite{JKVmaps}):
$$\left [X_{g,n,D}^{r,\pmb m}\right ]^v=
\cw(\pmb m)
\cap p^* \left[X_{g,n,D} \right]^v,$$
where $\cw(\pmb m)$ is
the so called Witten's top Chern class,
a cohomology class, whose degree is opposite to the
Euler characteristic of the universal spin structure.
(The explicit expression for the degree
is $((r-2)(g-1)-n+\sum_i m_i)/r$, by Riemann--Roch.)
The class
$\cw$ has several compatible constructions \cite{PV}
\cite{Mo} \cite{chK}, which
extend naturally from $\MMMbar_{g,n}^{r,\pmb m}$ to $X_{g,n,D}^{r,\pmb m}$.
Furthermore, this class
can be understood as the virtual fundamental homology
class of the space of $1/r$-differentials on stable maps, but we
do not develop this point here.

The definitions of the
tautological classes $\psi_1, \dots, \psi_n$,
$\kappa_d$, and $\ch_d$ introduced in \S\ref{Sssec:charclasses}
naturally extend over $X_{g,n,D}^{r, \pmb m}$.

The coefficients appearing in the $r$-spin Gromov--Witten potential
are intersection number of the tautological classes against the
virtual $r$-spin class $\big[ X_{g,n,D}^{r,\pmb m}\big] ^v$.
We recall from \S\ref{defn:twisted_rspin}
the definition of the genus-$g$ $r$-spin twisted Gromov--Witten potential of $X$
$$
F_{g,r}({\pmb t},{\pmb s})=\\
\sum_D Q^D \sum_{n\ge 1}\;\; \frac{1}{n!} \,
\sum_{
\substack{m_1, \dots, m_n \\ a_1, \dots, a_n\\ \mu_1, \dots, \mu_n}
} \; \frac{1}{r^{g-1}}\!\!\!
\int\limits_{\left[ X_{g,n,d}^{r,\pmb m}\right] ^v} \!\!\!\!
\exp\biggl(\sum_{d\ge 1} s_d \, \ch_d \biggr) \;
\prod_{i=1}^n \psi_i^{a_i} \, \ev_i^*(h_{\mu_i}) \;
\prod_{i=1}^n t_{a_i}^{\mu_i\otimes m_i}.
$$
The total $r$-spin twisted Gromov--Witten potential of $X$ is given by
$F_r({\pmb t},{\pmb s})=
\sum\limits_{g\ge 0}\hbar^{g-1}F_{g,r}({\pmb t},{\pmb s})$.

\subsection{Properties of the virtual $r$-spin class}\label{Ssec:propvirt}
We state the properties of the virtual $r$-spin class
$\big[ X_{g,n,d}^{r,\pmb m}\big] ^v$
needed in the rest of the paper.
The main result is Proposition \ref{pro:propvirt}, which can be regarded as the
generalization to $r$-spin maps of
the main factorization properties
of the virtual fundamental class $[X_{g,n,D}]^v$.
The notation follows closely Faber and
Pandharipande's treatment \cite[\S1.2]{FP}.
\begin{rem}In \cite[\S5]{JKV}, the authors provide
a statement of the Gromov--Witten $r$-spin
factorization properties
after pushforward
via the forgetful morphism $p\colon X_{g,n,D}^{r,\pmb m}\to X_{g,n,D}$.
These factorization properties are sufficient if we
wish to intersect $\big[ X_{g,n,D}^{r,\pmb m}\big] ^v$
only with pullbacks from $X_{g,n,D}$.
However,
in the $r$-spin twisted Gromov--Witten potential above, we
consider intersections of classes such as $\ch_d$ that are not
pullbacks from $X_{g,n,D}$.
\end{rem}

To begin with, at (\ref{Ssec:propvirt}.1-3), we set up natural
morphisms $\pi$, $j$, and $\mu$ needed in the statements.
First, recall the morphism forgetting the $(n+1)$st point:
\begin{equation}
\pi\colon X_{g,n+1,D}^{r,(\pmb m, 1)}\longrightarrow X_{g,n,D}^{r,\pmb m}.
\end{equation}
Second, we extend the discussion \S\ref{Ssect:Boundary} of the
boundary locus to
the substack $\partial X_{g,n,D}^{r,\pmb m}
\hookrightarrow X_{g,n,D}^{r,\pmb m}$
of singular objects: i.e. $r$-spin structures over
singular $r$-prestable curves
mapping to $X$.
By the same definitions of \S\ref{Ssect:Boundary}, we get
the locus $\mathcal D$ classifying
triples ($C$, node of $C$, branch of $C$ at the node), which
admits the natural decomposition $$\mathcal D=
\bigsqcup_{\substack{0\le l \le g\\
I\subseteq [n]}} \Dcal_{l,I}\sqcup \bigsqcup _{1\le q\le r}
\Dcal ^q_{\irr}$$ analogue to
\eqref{eq:decompD}. The images of the morphisms
$j_{l,I}\colon \Dcal_{l, I}\longrightarrow
X_{g,n,D}^{r,\pmb m}$
and $j_{\irr}^q\colon \Dcal_{\irr}^q
\longrightarrow X_{g,n,D}^{r,\pmb m}$ describe the entire boundary locus.
As in \S\ref{Ssect:Boundary}, the stack $\mathcal D$
can be projected to certain moduli stacks,
which we denote by $\Delta_\xi$, where $\xi$ labels the topological type
of the splitting of the curve and of the map at the node.

First, we introduce the set $\Omega$ of
such splittings $\xi$
$$
\Omega =
\Omega_{\irr} \sqcup 
\bigsqcup_{\substack{0\le l \le g\\
I\subseteq [n]}} \Omega _{l,I},
$$
with
$\Omega_{l,I}=\{(l,I,A)\mid A\in H_2(X,\ZZ)\},$ \text{and}
$\Omega_{\irr}=\{(\irr,q)\mid q\in \{1,\dots,r\}\}.$
To each $\xi\in \Omega$, we attach a stack $\Delta_\xi$ as follows.
For $\xi=(l,I,A) \in \Omega_{l, I}$, we denote by $\Delta_\xi$
the stack fitting in the fibre diagram
\[\xymatrix@R=0.3cm
{\Delta_\xi\ar[rr]\ar[dd]&&
X_{l',\abs{I'}+1,A'}^{r,(\pmb m_{I'},q')}\ar[dd]^{\ev'}\\
&\square &\\
X_{l,\abs{I}+1, A}^{r, (\pmb m_I, q)}\ar[rr]_{\ev} &&X, }
\]where $A'=D-A\in H_2(X,\ZZ)$, $q$ equals the index $q(l,I)$
defined by equation \eqref{eq:qlI}, and the fibred product
is taken with respect to
the morphisms $\ev$ and $\ev'$ evaluating the $(\abs{I}+1)$st and the
$(\abs{I'}+1)$st point.
Equivalently, we say that
$\Delta_\xi$
fits in the fibre diagram
\[\xymatrix@R=0.3cm
{\Delta_\xi\ar[rr]\ar[dd]&& X\ar[dd]^{\delta=(\id,\id)}\\
&\square &\\
X_{l,\abs{I}+1, A}^{r, (\pmb m_I, q)}\times X_{l',\abs{I'}+1,A'}^{r,(\pmb m_{I'},q')}
\ar[rr]_{\quad \quad \quad \ev\times \ev'} &&X\times X,}
\]
where $\delta\colon X\to X\times X$ is the diagonal morphism.

Similarly, for $\xi=(\irr,q) \in \Omega_{\irr}$, the stack $\Delta_\xi$
fits in the fibre diagram
\[\xymatrix@R=0.3cm
{\Delta_\xi\ar[rr]\ar[dd]&& X\ar[dd]^{\delta=(\id,\id)}\\
&\square &\\
X_{g-1,n+2, D}^{r, (\pmb m, q,q')}\ar[rr]_{\quad (\ev,\ev')} &&X\times X, }
\]
where the morphisms $\ev$ and $\ev'$ evaluating the $(n+1)$st and the
$(n+2)$nd point.

By applying to
$\mathcal D_{l,I}$ and $\mathcal D_{\irr}^q$
the morphism defined in \S\ref{Ssect:Boundary}
(see Figure \ref{fig:tetons}) we obtain the diagrams
\begin{equation}
\xymatrix{
  &\Dcal_{l,I}\ar[ld]_{\mu_{l,I}}\ar[rd]^{j_{l,I}}&&
l\in \{0, \dots, g\},\ I\subseteq [n]\\
\bigsqcup _{\xi\in \Omega_{l,I}} \Delta_\xi &&X_{g,n,D}^{r,\pmb m}&}
\end{equation}
\begin{equation}
\xymatrix{  &\Dcal^q_{\irr}\ar[ld]_{\mu^q_{\irr}}\ar[rd]^{j_{\irr}^q}&&
            q\in \{1, \dots, r\}\qquad \\
\Delta_{(\irr,q)} &&
X_{g,n,D}^{r,\pmb m}&
}
\end{equation}

\begin{rem} We notice that certain stacks
$\Delta_\xi$ may well be empty. This is indeed the case if $A$ or $D-A$ is not
an effective cycle in $H^2(X,\ZZ)$.
\end{rem}

The stacks $\Delta_\xi$ are moduli functors
classifying $r$-stable spin maps and, therefore, are
equipped with virtual $r$-spin classes $\left[ \Delta_\xi\right] ^v$ obtained,
as above, by intersecting
the virtual fundamental class of the corresponding moduli stack of stable maps
with Witten's class $\cw$.
Such virtual $r$-spin classes $\left[\Delta_\xi\right]^v$
can be explicitly obtained as follows
(Axiom 4 of \cite{BM}, see also \cite[\S1.2]{FP}).
For $\xi=(l,I,A)\in \Omega_{l,I}$, we have
\begin{equation}\label{eq:virtlI}
\left[\Delta_{(l,I,A)}\right]^v=
\Big[X_{l,\abs{I}+1,A}^{r,(\pmb m_I,q)}\Big]^v\times
\Big[X_{l',\abs{I'}+1,A'}^{r,(\pmb m_{I'},q')}\Big ]^v\cap
(\ev\times \ev')^{-1}(\delta),\end{equation}
where $\delta$ is the diagonal cycle in $X\times X$. For
$\xi=(\irr,q)\in \Omega_{\irr}$, we have
\begin{equation}\label{eq:virtirr}
\left[\Delta_{(\irr,q)}\right]^v=
\Big[X_{g-1,n+2,D}^{r,(\pmb m,q,q')}\Big]^v\cap
(\ev, \ev')^{-1}(\delta).
\end{equation}

The factorization property  relates
the virtual $r$-spin class of
$\Delta_\xi$ and $X_{g,n,D}^{r,\pmb m}$. Note that it is delicate to
restrict the virtual fundamental class of $X_{g,n,D}^{r,\pmb m}$ to
the boundary locus, because
$X_{g,n,D}^{r,\pmb m}$ may be singular
or simply not of the expected dimension.
This problem already arises for moduli
of stable maps, where calculations involve the natural
morphism from $X_{g,n,D}$ to
the nonsingular algebraic stack $\mathfrak M_{g,n}$ of
prestable $n$-pointed genus-$g$ curves.
Similarly,
we need to regard
$X_{g,n,D}^{r,\pmb m}$ alongside with the natural morphism to
the nonsingular algebraic
stack $\mathfrak M_{g,n}^{r,\pmb m}$ of $r$-prestable curves
equipped with an $r$-spin structure
of type $\pmb m$:
$$X_{g,n,D}^{r,\pmb m}\longrightarrow \mathfrak M_{g,n}^{r,\pmb m}.$$
Then, we take
refined pullbacks $(\mathfrak j_{l,I})^!$ and $(\mathfrak
j_{\irr}^q)^!$  (see~\cite{Fulton}) via the natural morphisms
$$\mathfrak j_{l,I} \colon \mathfrak
D_{l,I}\to \mathfrak M_{g,n}^{r,\pmb m} \qquad \text{and}\qquad
\mathfrak j_{\irr}^q\colon \mathfrak
D^q_{\irr}\to \mathfrak M_{g,n}^{r,\pmb m}$$
induced by the usual decomposition of
the locus $\mathfrak D$ mapping to the boundary
locus of $\mathfrak M_{g,n}^{r,\pmb m}$
representing $r$-spin structures over singular $r$-prestable curves.
Clearly,
the morphisms $\mathfrak j_{l,I}$,
$\mathfrak j_{\irr}^q$, $j_{l,I}$, and $j_{\irr}^q$
 fit in the fibre diagrams
\[\xymatrix@R=0.2cm{
\mathcal D_{l,I}\ar[rr]^{j_{l,I}}\ar[dd]&& X_{g,n,D}^{r,\pmb m}\ar[dd] &&
\mathcal D_{\irr}^q\ar[rr]^{j_{\irr}^q}\ar[dd]&& X_{g,n,D}^{r,\pmb m}\ar[dd]\\
&\square&&&&\square&\\
\mathfrak D_{l,I}\ar[rr]_{\mathfrak j_{l,I}}&& \mathfrak M_{g,n}^{r,\pmb m}.
&& \mathfrak
D_{\irr}^q\ar[rr]_{\mathfrak j_{\irr}^q}&& \mathfrak M_{g,n}^{r,\pmb m}.
}
\]
where  $\mathfrak D_{l,I}$ and
$\mathfrak D ^q_{\irr}$ are the terms of the natural decomposition
 $$\mathfrak D=\bigsqcup_{\substack{0\le l \le g\\
I\subseteq [n]}} \mathfrak D_{l,I}\sqcup \bigsqcup _{1\le q\le r}
\mathfrak D ^q_{\irr}.$$

\begin{pro}\label{pro:propvirt}
The virtual $r$-spin class $\big[ X_{g,n,D}^{r,\pmb m}\big]^v$
satisfies the following properties.

\noindent {\em Factorization property.}
For any $l\in \{0,\dots, g\}$ and $I\subseteq [n]$, we
have
$$(\mu_{l,I})_*(\mathfrak j_{l,I})^!\big[X_{g,n,D}^{r,\pmb m} \big]^v=
\sum_{\xi\in \Omega_{l,I}}
\left[\Delta_{\xi}\right]^v,$$
and for any $q\in \{1,\dots, r\}$, we have
$$(\mu_{\irr}^q)_*(\mathfrak j_{\irr}^q)^!\big[X_{g,n,D}^{r,\pmb m} \big]^v=
\left[\Delta_{(\irr,q)}\right]^v,$$

\noindent {\em Forgetting property.}
We have
\begin{equation*}
\big[X_{g,n+1,D}^{r,(\pmb m,1)} \big]^v=\pi^*
\big[X_{g,n,D}^{r,\pmb m} \big]^v,
\end{equation*}
where
$\pi$ is the morphism
$X_{g,n+1,D}^{r,(\pmb m, 1)}\longrightarrow X_{g,n,D}^{r,\pmb m}$
forgetting the $(n+1)$st point.
\end{pro}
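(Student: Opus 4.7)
The plan is to deduce both statements from the standard
Behrend--Manin axioms for the stable-map virtual class
$[X_{g,n,D}]^v$ (see~\cite[\S1.2]{FP}), combined with two
compatibility properties of Witten's top Chern class $\cw$: its
splitting under normalization, and its behavior under forgetting a
marked point of index~$1$. The common starting point is the identity
$\left[X_{g,n,D}^{r,\pmb m}\right]^v = \cw(\pmb m) \cap
p^*[X_{g,n,D}]^v$ together with the observation that $\cw(\pmb m)$
lives naturally as a bivariant class on the smooth Artin stack
$\mathfrak M_{g,n}^{r,\pmb m}$, pulled back to $X_{g,n,D}^{r,\pmb m}$;
as such, it commutes with refined Gysin pullbacks along regular
embeddings and with flat pullbacks.

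For the factorization property, I would organize the computation
around the cartesian square relating $\Dcal_{l,I}$,
$X_{g,n,D}^{r,\pmb m}$, $\mathfrak D_{l,I}$ and
$\mathfrak M_{g,n}^{r,\pmb m}$ recorded in the excerpt. Moving
$\cw(\pmb m)$ outside the refined pullback, one obtains
$(\mathfrak j_{l,I})^!\left[X_{g,n,D}^{r,\pmb m}\right]^v
= \cw(\pmb m)|_{\Dcal_{l,I}} \cap
(\mathfrak j_{l,I})^! p^*[X_{g,n,D}]^v$. The classical factorization
axiom applied to $[X_{g,n,D}]^v$ over $\mathfrak M_{g,n}$ expresses
the second factor, after pushforward to the disjoint union of
products $X_{l,|I|+1,A}\times X_{l',|I'|+1,A'}$, as the sum over
splittings $\xi\in\Omega_{l,I}$ of the external products of the
stable-map virtual classes capped against the diagonal $\delta$. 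The
main input is then the splitting property of $\cw$ along
normalization: the restriction of $\cw(\pmb m)$ to $\Dcal_{l,I}$ is
the $\mu_{l,I}$-pullback of
$\cw(\pmb m_I,q)\times \cw(\pmb m_{I'},q')$ with $q=q(l,I)$, a
compatibility established in each of the three existing constructions
of Witten's class~\cite{PV,Mo,chK}. A projection-formula argument
then matches the outcome with the expression~\eqref{eq:virtlI} for
$[\Delta_{(l,I,A)}]^v$. The irreducible case is treated analogously,
with $\cw(\pmb m,q,q')$ playing the role of the boundary Witten class
on $\Dcal^q_{\irr}$, together with~\eqref{eq:virtirr}; the relevant
degree calculation for $\mu_{\irr}^q$ was carried out in
\S\ref{Ssect:Boundary}.

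For the forgetting property, the corresponding statement
$[X_{g,n+1,D}]^v = \pi^*[X_{g,n,D}]^v$ for the stable-map virtual
class is classical. It therefore suffices to establish
$\cw(\pmb m,1)=\pi^*\cw(\pmb m)$. This, in turn, follows from the
observation that, with $m_{n+1}=1$, the universal spinor bundle on
$X_{g,n+1,D}^{r,(\pmb m,1)}$ is the pullback of the universal spinor
bundle on $X_{g,n,D}^{r,\pmb m}$: on the universal curve, the twist
by $-[s_{n+1}]$ cancels exactly the new section appearing in
$\omega_{\log}$, so that
$\omega_{\log}(-\sum_{i=1}^{n+1} m_i[s_i])$ is the pullback of
$\omega_{\log}(-\sum_{i=1}^n m_i[s_i])$. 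Capping commutes with
pullback, yielding the result.

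The hard part is the splitting of $\cw$ entering the factorization
step. Although it is part of the construction in each
of~\cite{PV,Mo,chK}, the careful bookkeeping of the local indices
$q$ and $q'$ at the two branches of the node---and, in the
irreducible case, the interplay with the ramification of
$\mu_{\irr}^q$---must be integrated into the calculation. The
forgetting property, by contrast, reduces to a direct verification
at the level of the universal spinor bundle.
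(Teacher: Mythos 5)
Your proposal follows essentially the same strategy as the paper's proof: split the $r$-spin virtual class as $\cw(\pmb m)\cap p^*[X_{g,n,D}]^v$, apply the Isogeny property of the stable-map virtual class to handle $(\mathfrak j)^! p^*[X_{g,n,D}]^v$, invoke a factorization for Witten's class at the boundary, and finish with the projection formula. The forgetting property is likewise reduced to the compatibility of $[X_{g,n,D}]^v$ and $\cw(\pmb m)$ with forgetting a marked point of index~$1$, and your sketch that $m_{n+1}=1$ cancels the extra section in $\omega_{\log}$ gives a correct geometric reason for $\cw(\pmb m,1)=\pi^*\cw(\pmb m)$.

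There is one point where you assert something strictly stronger than what the paper uses, and you should be careful with it. You state the boundary factorization of Witten's class in \emph{pullback} form, namely $(j_{l,I})^*\cw(\pmb m)=(\mu_{l,I})^*\bigl(\cw(\pmb m_I,q)\times\cw(\pmb m_{I'},q')\bigr)$. The paper instead uses the \emph{pushforward} form $(\mu_{l,I})_*(j_{l,I})^*\cw(\pmb m)=\cw(\pmb m_I,q)\times\cw(\pmb m_{I'},q')$, and similarly for the irreducible boundary. These are not equivalent: the discussion in \S\ref{Ssect:Boundary} (including the footnote) makes it explicit that $\mu_{l,I}$ and $\mu^q_{\irr}$ have degree~$1$ but are \emph{not} isomorphisms in general, so $\mu_*\mu^*=\id$ while $\mu^*\mu_*\neq\id$. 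Your pullback form therefore implies the paper's pushforward form but not conversely. The pushforward form is what \cite{JKV}, \cite{PV}, \cite{P} establish and it is already enough: once you write $(\mathfrak j_{l,I})^![X_{g,n,D}^{r,\pmb m}]^v=(j_{l,I})^*\cw(\pmb m)\cap(\mathfrak j_{l,I})^!p^*[X_{g,n,D}]^v$ and apply the Isogeny property to express the second factor as a $\mu_{l,I}^*$-pullback, a single application of the projection formula under $(\mu_{l,I})_*$ produces exactly $(\mu_{l,I})_*(j_{l,I})^*\cw(\pmb m)$ capped against the split stable-map virtual class. If you want to retain the pullback form, you should either verify it in the constructions you cite, or (more economically) replace it with the pushforward form and let the projection formula do the work, as the paper does.
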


\begin{proof}
The equations above are based on the properties of the
virtual fundamental class $[X_{g,n,D}]^v$ proved in
\cite{BF} \cite{BM} \cite{LT} and on
the properties of Witten's class $\cw(\pmb m)$
proved in \cite{JKV} \cite{PV} \cite{P}.

The forgetting property is an immediate consequence of
the analogous properties for
$[X_{g,n,D}]^v$ and $\cw(\pmb m)$.

On the other hand, the factorization property requires
the Isogeny property of \cite{BM} \cite{BF}. This condition
claims
that for any $l\in \{0,\dots, g\}$ and $I\subseteq [n]$, we
have
$$(\mathfrak j_{l,I})^!p^*\big[X_{g,n,D}
\big]^v=(\mu_{l,I})^*\left(
p^*\big[X_{l,\abs{I}+1,A}\big]^v\times
p^*\big[X_{l',\abs{I'}+1,A'}\big ]^v\cap
(\ev\times \ev')^{-1}(\delta)\right),$$
where, on the right hand side,
$\delta$ is the diagonal in $X\times X$ and we implicitly sum over
the parameter $A$ ranging over $H_2(X,\ZZ)$. Furthermore, for
$q\in \{1, \dots, r\}$, we have
\begin{equation*}
(\mathfrak j_{\irr}^q)^!p^*\big[X_{g,n,D}\big]^v=
(\mu_{\irr}^q)^*\left(p^*\big[X_{g-1,n+2,D}\big]^v\cap
(\ev\times \ev')^{-1}(\delta)\right).
\end{equation*}

Witten's cohomology class $\cw(\pmb m)$
satisfies the following factorization properties
proved in \cite{JKV}, \cite{PV}, and \cite{P}.
For any $l\in \{0, \dots, g\}$ and for any $I\subseteq[n]$, we have
\begin{equation*}
(\mu_{l,I})_* (j_{l,I})^*\left(\cw(\pmb m)\right)=\cw(\pmb m_I, q)
\times \cw(\pmb m_{I'}, {q'}).
\end{equation*}
For any $q\in \{1, \dots, r\}$, we have
\begin{equation*}
(\mu^q_{\irr})_* (j^q_{\irr})^*\left(\cw(\pmb m)\right)=\cw(\pmb m, q, {q'}).
\end{equation*}
The projection formula for $\mu_{l,I}$ and $\mu_{\irr}^q$ yields
immediately the desired factorization
property.
\end{proof}

\begin{rem}
We briefly recall why we restrict the
range of the indices $m_i$ to $\{1,\dots, r\}$.
Indeed, it makes sense to define the moduli of $r$-stable spin curves for any
multiindex $\pmb m\in \ZZ^n$. On the other hand,
the construction of Witten's class $\cw(\pmb m)$
requires that the entries $m_i$ are positive. Furthermore,
this extended Witten's class $\cw(\pmb m)$ satisfies the so called descending and the vanishing properties.
The descending property claims that for all $i\in \{1, \dots, n\}$, we have
\begin{equation*}
\cw(\pmb m+r\pmb \delta_i)=-\frac{m_i}{r}\ \psi_i \ \cw(\pmb m).
\end{equation*}
The vanishing property claims that,
if $m_i\in r\ZZ$ for some $1\le i\le n$, then we have
\begin{equation*}
\cw(\pmb m)=0.
\end{equation*}
Automatically, the
virtual $r$-spin class $\big[ X_{g,n,D}^{r,\pmb m}\big]^v$
shares analogous recursive relations.
Because of these properties, it makes sense to restrict ourselves
to the values $m_i \in \{1, \dots, r-1 \}$.
\end{rem}

\section{Applying the Grothendieck--Riemann--Roch formula}
\label{Sec:GRR}
In~\cite[Thm.~1.1.2]{ch}, the first author applied the
Grothendieck--Riemann--Roch formula to the $r$-spin structure $\Lcal$
over the universal curve. These methods can be easily generalized
to any family of $r$-bubbled spin curves.
In \S\ref{Sssec:GRR},  we describe how the GRR formula works in this case.
In \S\ref{Sssec:GRRonX} we intersect the GRR
formula with the virtual $r$-spin class
of $X_{g,n,D}^{r,\pmb m}$. Finally in \S\ref{Sssec:diffop}
we use this version of the
GRR formula
to deduce the differential equation satisfied
by the twisted $r$-spin potential. The final
result, Proposition \ref{pro:diffop},
is the crucial ingredient allowing us to
generalize Givental's quantization.

\subsection{The GRR formula for $r$-bubbled spin curves}
\label{Sssec:GRR}

In~\cite{ch} the GRR formula was applied to the universal curve
$\pi\colon \CCCbar_{g,n}^{r,\pmb m} \to \MMMbar_{g,n}^{r,\pmb m}$.
In this section we briefly describe these computations to show that
they actually work for any family of $r$-spin curves with maximal
variation.

\begin{defn}
A family of $n$-pointed, $r$-prestable curves $\pi:C \to B$
is a family {\em with maximal variation} if for any $b \in B$
the Kodaira--Spencer homomorphism
$T_bB \to \Ext^1(\Omega_{C_b},\Ocal_{C_b})$ is surjective.
\end{defn}

It follows from the definition, that in a family~$B$ with maximal
variation, the boundary
$\d B = \{b\in B \; | \; C_b \mbox{ is singular} \}$
is a normal crossings divisor in $B$.
As in \S\ref{Ssect:Boundary}, we construct a
smooth scheme $D$ whose points are triples
$(b \in \d B, \mbox{node of } C_b, \mbox{branch at the node})$.
The scheme~$D$ has a decomposition
$D=\bigsqcup_{l,I} D_{l,I}\sqcup \bigsqcup_q D_{\irr}^q$
endowed with morphisms
$j_{l,I}\colon D_{l,I}\to B$ and $j_{\irr}^q\colon D_{\irr}^q\to B.$

The schemes $B$ and $D$ are equipped with the tautological
cohomology classes $\kappa_d, \psi_1,\dots, \psi_n\in H^*(B,\ZZ)$
and $\psi,\psi'\in H^2(D,\ZZ)$ as in~\S\ref{Sssec:charclasses}.

The following result is a slight generalization of the main
theorem of~\cite{ch}.

\begin{pro}[\cite{ch}]\label{pro:GRR}
Consider a family of $r$-prestable curves $C \to B$ with
maximal variation over a nonsingular scheme $B$,
equipped with an $r$-spin structure $\mathcal L$
\[
\xymatrix
{{\mathcal L}\ar[rd] &&\\
&C\ar[d]^\pi & &&\fie\colon {\mathcal L}^{\otimes r}
\xrightarrow{\ \sim \ }\omega_{\log}
(-\textsum_{i=1}^n m_i[s_i(B)]).\\
&B\ar@/^1.5cm/[u]^{s_1}_\cdots\ar@/^0.5cm/[u]^{s_n}
}
\]
Then, we have
\begin{multline*}
\ch_d(R\pi_*\mathcal L) =
\frac{B_{d+1}(\frac1r)}{(d+1)!}\kappa_d
-\sum_{i=1}^n  \frac{B_{d+1}(\frac{m_i}r)}{(d+1)!} \psi_i^d
+\frac{r}{2} \sum_{q=1}^{r}
\frac{B_{d+1}(\frac{q}r)}{(d+1)!} \;
(j^q_{\irr})_* \!
\left(\sum_{a+a'=d-1}  \!\!\!\!\!
(\psi)^a(-\psi')^{a'}\right) \qquad \; \\
+\frac{r}{2} \sum_{\substack{0\le l\le g\\I\subseteq[n]}} \!\!
\frac{B_{d+1}(\frac{q(l,I)}r)}{(d+1)!} \;
(j_{l,I})_* \!
\left(\sum_{a+a'=d-1} \!\!\!\!\!
(\psi)^a(-\psi')^{a'}\right).
\end{multline*}
\end{pro}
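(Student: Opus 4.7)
The strategy is to apply Grothendieck--Riemann--Roch to $\pi:C\to B$ with input $\mathcal L$, namely
$$
\ch(R\pi_*\mathcal L) \;=\; \pi_*\bigl(\ch(\mathcal L)\cdot\td(T^\bullet_\pi)\bigr).
$$
As stressed in the discussion of the three constructions in \S\ref{Ssec:3curves} and in Remark~\ref{rem:samech}, GRR applies most cleanly on the $r$-bubbled model of $C$, where the total space is smooth, $\mathcal L$ is an honest line bundle, and $\pi$ has scheme-theoretic fibers. I would therefore carry out the entire computation on this model and only at the end transfer the result to the stacky picture via Remark~\ref{rem:samech}.

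The first step is to express $c_1(\mathcal L)$ in terms of tautological classes. The isomorphism $\mathcal L^{\otimes r}\cong\omega_{\log}(-\textsum_i m_i[s_i])$, combined with the explicit vanishing divisor $E$ on the exceptional chains whose multiplicities $a,2a,\dots,ba,b(a{-}1),\dots,b$ are recalled in the ``$r$-bubbled spin curves'' paragraph of \S\ref{Ssec:3curves}, gives
$$
r\,c_1(\mathcal L) \;=\; c_1(\omega_{\log})-\textsum_i m_i[s_i]-[E].
$$
Expanding $\ch(\mathcal L)=\exp(c_1(\mathcal L))$ and combining with $\td(T^\bullet_\pi)$, I would split $\pi_*$ according to support into (i) a generic/bulk contribution, (ii) a contribution supported on each section $s_i$, and (iii) a contribution supported on each exceptional chain above a node of a fiber. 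Pieces (i) and (ii) are the template of~\cite{Mumford}: combining $\ch(\mathcal L)$ with the Todd class produces the generating series $t\,e^{tx}/(e^t-1)=\sum_d B_{d+1}(x)t^d/d!$ of Bernoulli polynomials, with $x=1/r$ in the bulk and $x=m_i/r$ near $s_i$, yielding the $\kappa_d$ term with coefficient $B_{d+1}(1/r)/(d+1)!$ and the $\psi_i^d$ term with coefficient $-B_{d+1}(m_i/r)/(d+1)!$.

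The main obstacle is piece (iii), the node contribution. On each exceptional chain of $r-1$ rational components above a node with local indices $(a,b)$, $a+b=r$, one must compute $\ch(\mathcal L)\,\td(T^\bullet_\pi)$ component by component using the multiplicities of $E$, push it forward to $B$, and sum across the chain. The Bernoulli identity $B_{d+1}(x+1)-B_{d+1}(x)=(d+1)x^d$ makes the chain sum telescope, leaving only contributions at the two extremities where the chain meets the main curve. At each extremity the residual factor is $B_{d+1}(q/r)/(d+1)!$ with $q=a$ or $q=b$, multiplied by the classical pushforward expansion $\sum_{a+a'=d-1}\psi^a(-\psi')^{a'}$ arising from blowing down the chain and converting powers of the exceptional divisor into the difference of the two branch cotangent classes, exactly as in Mumford's original calculation. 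The prefactor $r/2$ combines the standard $1/2$ (since $D$ double-covers the node locus of $B$) with a factor $r$ reflecting the length of the bubble chain. Assembling (i), (ii), (iii) yields the displayed formula.

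Finally, the argument works not only for the universal family but for any family with maximal variation, because the only geometric inputs used above are the normal-crossings structure of $\partial B$ (guaranteed by maximal variation) and the local form of the exceptional chains at each node (identical to the universal case). This is precisely the reason~\cite{ch} can be transcribed almost verbatim to obtain the proposition.
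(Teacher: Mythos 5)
Your plan is aligned with the paper's: replace the $r$-stable curve by the $r$-bubbled model (using Remark~\ref{rem:samech} to justify the substitution), apply GRR there, express $\mathcal L^{\otimes r}$ via the vanishing divisor, and split the pushforward into bulk, section and node contributions. The bulk and section analysis is sound and matches the Mumford--Faber--Pandharipande template with Bernoulli polynomials at $1/r$ and $m_i/r$. The paper's sketch emphasizes the same ingredients: GRR on $\tilde C$, the normal-crossings structure from maximal variation, and the divisor of the map $\mathcal L^{\otimes r}\to\omega_{\log}(-\textsum m_i[s_i])$.

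Where your plan diverges from the paper, and where it has a genuine gap, is the node contribution. The paper's sketch identifies a specific technical ingredient that you never touch: the ``bubble classes,'' namely the first Chern classes of the normal line bundles to the two sections of each $P_i\to D$, and the nontrivial lemma that these can be rewritten in terms of the branch cotangent classes $\psi,\psi'$ on the \emph{coarse} curve over each boundary stratum. That conversion is exactly where the $A_{r-1}$ resolution geometry enters and is responsible for the passage from a sum of $r$ node contributions on $\tilde C$ to a single expression in $\psi,\psi'$ times a Bernoulli polynomial in $q/r$, with the extra factor $r$. Your explanation of the prefactor is not right as stated: the chain has length $r-1$, not $r$, and if your telescope really leaves only two endpoint contributions there is no surviving factor $r$ to ``reflect the chain length.'' Moreover, the Bernoulli identity $B_{d+1}(x+1)-B_{d+1}(x)=(d+1)x^d$ would apply if the effective argument of the Bernoulli polynomial jumped by $1$ between consecutive nodes of the chain, but the arguments one actually encounters are fractions with denominator $r$ that jump by $a/r$ or $b/r$, so the identity you want to use does not directly apply. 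In short, the telescoping picture is a suggestive heuristic, but without expressing the bubble classes via $\psi,\psi'$ (the step the paper singles out as the crux) you cannot account for either the arguments $q(l,I)/r$, $q/r$ of the Bernoulli polynomials or the coefficient $r/2$; this is precisely the part of the calculation the citation to~\cite{ch} is supplying.
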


\noindent
{\em Sketch of a proof.}
We want to calculate the Chern character of the
direct image $R\pi_*\Lcal$.
The first step of the proof is to replace
the $r$-stable curve $\pi\colon C\to B$
by the corresponding
$r$-bubbled curve $\wt\pi\colon \wt C\to B$.
Replacing $\pi_*$ with $\wt\pi_*$ does not change
the higher direct image because of  the identities of Remark~\ref{rem:samech}.
The GRR formula applied to this situation reads
$$
\ch(R\wt\pi_*\mathcal L)
= \pi_* \bigl(
 \td (\wt \pi) \ch(\Lcal) \bigr).
$$
Now the aim is to compute
the right-hand side.

The maximal variation condition guarantees that the bubbles
form a normal crossings divisor in~$\widetilde C$, while the
nodes of the singular fibers of~$\widetilde C$ form a
smooth subscheme of pure codimension~2.

We can construct $r-1$ families $P_i \to D$, $1 \leq i \leq r-1$,
of projective lines over~$D$
that map to the bubbles of~$\widetilde C$
(starting from the closest bubble to the chosen branch).
In the sequel a {\em bubble} will be the image of one of the
$P_i$ in~$\widetilde C$.
Each family~$P_i$ has two disjoint sections, where
the bubble intersects neighboring bubbles or branches.
The normal sheaves
relative to these sections
are line bundles over~$D$.
The first Chern classes of these
line bundles will be called {\em bubble  
classes}.
They are important for two reasons.
\begin{enumerate}
\item
For a family with smooth fibers, the class $\td(\wt\pi)$ is just the
Todd class of the relative tangent vector bundle; but in our
case not all fibers are smooth, and the nodes give a contribution
to $\td(\wt\pi)$. This contribution is expressed in terms of the
bubble  
classes.
\item
The $r$th tensor power $\Lcal^{\otimes r}$ is isomorphic to the relative
dualizing line bundle~$\omega_{\log}$ to~$\widetilde C$
twisted by the divisors formed by the bubbles
(see \S\ref{Ssec:3curves}). Therefore, to evaluate
$\ch(\Lcal)$ we need to know the intersections between
the classes represented by the bubbles. The intersection
of two different bubbles is simply their geometric intersection
(this is guaranteed by the maximal variation condition). However,
the selfintersection of a bubble is more complicated and
can be described in terms of the bubble  
classes.
\end{enumerate}

It is explained in~\cite{ch} that
the bubble  
classes can be expressed
via $\psi$ and $\psi'$ over every component $D_{l,I}$ and $D^q_{\irr}$.
Once we know this, a computation (though not a simple one)
leads to the result stated in the proposition. \qed

\subsection{The GRR formula and the virtual $r$-spin class}
\label{Sssec:GRRonX}

Proposition \ref{pro:GRR} allows
us to express the homology class $\ch_d\cap \big[ X_{g,n,D}^{r,\pmb m}\big]^v$.
\begin{pro}\label{pro:GRRonX}
We have
\begin{multline*}
\ch_d\cap \big [ X_{g,n,D}^{r,\pmb m}\big ]^v =
\frac{B_{d+1}(\frac1r)}{(d+1)!}\kappa_d \cap
\big [ X_{g,n,D}^{r,\pmb m}\big ]^v
-\sum_{i=1}^n  \frac{B_{d+1}(\frac{m_i}r)}{(d+1)!}
\psi_i^d \cap \big [ X_{g,n,D}^{r,\pmb m}\big ]^v
\\
\qquad +\frac{r}{2} \sum_{q=1}^{r}
\frac{B_{d+1}(\frac{q}r)}{(d+1)!} \;
(j^q_{\irr})_* \!
\left(\sum_{a+a'=d-1}  \!\!\!\!\!
(\psi)^a(-\psi')^{a'}\cap (\mathfrak j_{\irr}^q)^!\big [ X_{g,n,D}^{r,\pmb m} \big ]^v \right) \qquad \; \\
+\frac{r}{2} \sum_{\substack{0\le l\le g\\I\subseteq[n]}} \!\!
\frac{B_{d+1}(\frac{q(l,I)}r)}{(d+1)!} \;
(j_{l,I})_* \!
\left(\sum_{a+a'=d-1} \!\!\!\!\!
(\psi)^a(-\psi')^{a'}\cap (\mathfrak j_{l,I})^!\big [ X_{g,n,D}^{r,\pmb m} \big ]^v \right).
\end{multline*}
\end{pro}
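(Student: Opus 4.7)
The plan is to derive the homological formula of Proposition \ref{pro:GRRonX} from the pure cohomological identity of Proposition \ref{pro:GRR} by capping with the virtual $r$-spin class. The key observation is that every tautological class appearing in Proposition \ref{pro:GRR}—namely $\ch_d(R\pi_*\Lcal)$, $\kappa_d$, the $\psi_i$ on the base, and $\psi,\psi'$ on the boundary strata—lives naturally on the smooth ambient stack $\mathfrak M_{g,n}^{r,\pmb m}$ of $r$-prestable spin curves. The universal family over $\mathfrak M_{g,n}^{r,\pmb m}$ has maximal variation, so Proposition \ref{pro:GRR} applies over any smooth atlas of that stack and yields an identity of cohomology classes on $\mathfrak M_{g,n}^{r,\pmb m}$.

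First I would check that the structure morphism $X_{g,n,D}^{r,\pmb m}\to \mathfrak M_{g,n}^{r,\pmb m}$ is compatible with all the ingredients of the GRR formula: the universal $r$-prestable curve and the universal spin bundle on $X_{g,n,D}^{r,\pmb m}$ are pullbacks of the corresponding objects on $\mathfrak M_{g,n}^{r,\pmb m}$; flat base change for $R\pi_*$ identifies the $\ch_d$ on $X_{g,n,D}^{r,\pmb m}$ with the pullback of the analogous class on $\mathfrak M_{g,n}^{r,\pmb m}$; the classes $\kappa_d$ and $\psi_i$ are pullbacks for the same reason; and the boundary substacks $\mathcal D_{l,I},\mathcal D_{\irr}^q$ together with their $\psi,\psi'$ classes are pullbacks from $\mathfrak D_{l,I},\mathfrak D_{\irr}^q$ via the two Cartesian squares displayed just before Proposition \ref{pro:propvirt}. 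Pulling back the identity of Proposition \ref{pro:GRR} then gives the same formula as an identity of cohomology classes on $X_{g,n,D}^{r,\pmb m}$.

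Next I would cap both sides with $\big[X_{g,n,D}^{r,\pmb m}\big]^v$. The interior terms involving $\kappa_d$ and $\psi_i^d$ appear directly. For each boundary term, which has the shape $(\mathfrak j_{\xi})_*(\beta_\xi)$ on $\mathfrak M_{g,n}^{r,\pmb m}$, I would apply the Fulton compatibility of proper pushforward with the refined Gysin morphism for a fibre square: since $\mathfrak M_{g,n}^{r,\pmb m}$ is smooth the embedding $\mathfrak j_{\xi}$ is regular, and the diagrams displayed before Proposition \ref{pro:propvirt} are Cartesian, so
\[
\bigl((\mathfrak j_\xi)_*\beta_\xi\bigr)\cap \big[X_{g,n,D}^{r,\pmb m}\big]^v
=(j_\xi)_*\Bigl(\beta_\xi|_{\mathcal D_\xi}\;\cap\;(\mathfrak j_\xi)^!\big[X_{g,n,D}^{r,\pmb m}\big]^v\Bigr).
\]
Applying this to each of the strata $\xi=(l,I)$ and $\xi=(\irr,q)$ assembles the boundary part of the desired formula.

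The main obstacle is precisely handling the refined pullback $(\mathfrak j_\xi)^!\big[X_{g,n,D}^{r,\pmb m}\big]^v$, rather than a naive restriction: because $X_{g,n,D}^{r,\pmb m}$ may be singular, nonreduced, or of excess dimension along the boundary, one cannot simply restrict the virtual class. The trick is to factor $j_\xi$ through the smooth stack $\mathfrak M_{g,n}^{r,\pmb m}$, where $\mathfrak j_\xi$ becomes a regular embedding of known codimension so that Fulton's refined Gysin machinery applies; this is also the reason the statement of the Proposition is phrased via $(\mathfrak j_\xi)^!$ rather than $j_\xi^*$. Once this bookkeeping is done, the rest of the argument is formal functoriality.
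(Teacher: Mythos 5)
Your strategy is sound and reaches the same conclusion, but it takes a genuinely different route at the crucial step. The paper follows Faber--Pandharipande and first constructs an auxiliary nonsingular, finite-type Deligne--Mumford stack $\wt B$ containing $X_{g,n,D}^{r,\pmb m}$, equipped with an $r$-prestable spin curve of maximal variation; it obtains $\wt B$ as the fibre product $\wt B'\times_{\mathfrak M_{g,n}}\mathfrak M_{g,n}^{r,\pmb m}$ where $\wt B'$ is the FP embedding of $X_{g,n,D}$, and verifies that $\wt B$ is smooth via Olsson's local description \eqref{eq:localolsson}. Proposition~\ref{pro:GRR} is then applied on $\wt B$ and transported to $X_{g,n,D}^{r,\pmb m}$ by the projection formula for refined Gysin maps. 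You instead observe that all the tautological classes of Proposition~\ref{pro:GRR} already live on the smooth Artin stack $\mathfrak M_{g,n}^{r,\pmb m}$, that the universal family there has maximal variation, and that the identity can be pulled back directly along the structural morphism $X_{g,n,D}^{r,\pmb m}\to\mathfrak M_{g,n}^{r,\pmb m}$ before capping with the virtual class. This is conceptually cleaner and avoids constructing $\wt B$, but it requires invoking intersection theory and GRR on a smooth Artin stack which is only locally of finite type (e.g.\ Kresch's Chow theory), rather than on the finite-type nonsingular Deligne--Mumford stack $\wt B$ for which Fulton's formalism applies without further comment; in practice one would still restrict to a finite-type open substack of $\mathfrak M_{g,n}^{r,\pmb m}$ containing the image, at which point the two routes become very close. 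Your identification of the essential point---that $(\mathfrak j_\xi)^!$ rather than $j_\xi^*$ must be used because $X_{g,n,D}^{r,\pmb m}$ can be singular, nonreduced or of excess dimension, and that smoothness of the ambient stack is what makes Fulton's refined Gysin machinery applicable---matches the paper's reasoning exactly. Two small technical points you should spell out if writing this up: (i) the descent of the GRR identity from a smooth atlas of $\mathfrak M_{g,n}^{r,\pmb m}$ down to the stack itself needs a remark (the classes are smooth-local so this is fine), and (ii) the compatibility $(\mathfrak j_\xi)_*\beta_\xi\mapsto(j_\xi)_*(\cdot\cap(\mathfrak j_\xi)^!\cdot)$ is indeed Fulton Prop.~8.1.1(c), the same citation the paper uses.
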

\begin{proof}
In order to apply Proposition \ref{pro:GRR}, we need to show that $X_{g,n,D}^{r,\pmb m}$
can be embedded into a nonsingular Deligne--Mumford stack $\wt B$ equipped with
an $n$-pointed
$r$-prestable
curve $\wt \pi \colon \wt C\to \wt B$ of maximal variation
and with an $r$-spin structure
$\wt {\mathcal L}$ fitting in the following fibre diagram
\[\xymatrix@R=0.1cm{
\mathcal L\ar[dd]\ar[rr]&& \wt {\mathcal L}\ar[dd]\\
&\square&\\
 C\ar[dd]\ar[rr]&& \wt C\ar[dd]\\
&\square&\\
X_{g,n,D}^{r,\pmb m}\ar[rr]&& \wt B,
}
\]
where $\mathcal L\to C$ is the universal $r$-prestable
spin curve over $X_{g,n,D}^{r,\pmb m}$. This is actually the crucial
step of the proof: once $\wt B$ is constructed,
since Proposition \ref{pro:GRR} holds for $R\wt \pi_*\wt {\mathcal L}$
over $\wt B$, the desired
equation follows from the
projection formula for refined intersections \cite[Prop.~8.1.1,(c)]{Fulton}.

First, by \cite[\S1.2, Prop.~1]{FP}, there exists an embedding
of $X_{g,n,D}$ into a nonsingular Deligne--Mumford stack $\wt B'$
and a prestable curve  $\wt C'\to \wt B'$, whose variation is maximal,
and which extends the universal curve of $X_{g,n,D}$.
The markings also extend, and indeed we can
regard $\wt C'\to \wt B'$ as a morphism $\wt B'\to \mathfrak M_{g,n}$ extending $X_{g,n,D}\to \mathfrak M_{g,n}$.
Consider the fibred product $\wt B=\wt B' \times_{\mathfrak M_{g,n}}\mathfrak M_{g,n}^{r,\pmb m}$.
By construction (see (i) and (ii)), the stack $X_{g,n,D}^{r,\pmb m}$ is
the fibre over $\wt B'$ of $\wt B\to \wt B'$; therefore, it is embedded in $\wt B$. Furthermore,
$\wt B$ is naturally equipped with an $r$-prestable curve and
an $r$-spin structure, which extends the universal
$r$-spin structure of
$X_{g,n,D}^{r,\pmb m}$.

In this construction, it is crucial
to notice that $\wt B$ is a nonsingular Deligne--Mumford stack.
This follows from Olsson's description  of the stack of $r$-prestable curves.
Indeed, in \cite{Olsson}, the morphism $\wt B\to \wt B'$ at
a point $x\colon \Spec k\to \wt B'$ is
locally represented by the flat, finite
morphism of nonsingular Deligne--Mumford stacks
\begin{equation}\label{eq:localolsson}
[(\Spec \wt I)/(\pmmu_{r}^m)]\to \Spec I
\end{equation}
where the notation is chosen as follows. The scheme
$\Spec I$ is the versal deformation space of
$x$ at $\wt B'$. The index $m$ equals the number of nodes
of the curve $\wt C'_x$ over $x\in \wt B'$.
The ring $\wt I$ equals $I[z_1, \dots, z_m]/(z_i^{r}
 - t_i, \forall i)$,
where $t_1,\dots, t_m\in I$ are chosen so that $\{t_i=0\}\subset \Spec I$
is the locus where the $i$th
node persists. Finally, the group
$(\pmmu_{r})^m$ acts by multiplication on the coordinates $(z_1,\dots, z_m)$.
\end{proof}

\subsection{The differential operator}\label{Sssec:diffop}
Consider the genus-$g$ $r$-spin twisted Gromov--Witten potential
$F_g(\pmb s, \pmb t)$ of $X$.
Set
$$F_r({\pmb t},{\pmb s})=\sum\limits_{g\ge 0}
{\hbar}
^{g-1}
F_{g,r}({\pmb t},{\pmb s}),
\qquad
\text{and}
\qquad Z_r=\exp(F_r).$$

\begin{pro}\label{pro:diffop} We have
$$\frac{\partial} {\partial s_d} Z_r =
L_{d} Z_r,$$
where $L_d$ is the operator
\begin{multline*}
L_d=
\frac{B_{d+1}(\frac1r)}{(d+1)!}
\frac{\partial}{\partial t_{d+1}^{1\otimes 1}}
-\sum_{\substack{a\ge 0\\ \mu\otimes m}}\frac{B_{d+1}(\frac{m}r)}{(d+1)!}
t_{a}^{\mu\otimes m}\frac{\partial }{\partial t_{a+d}^{\mu\otimes m}}
\\
+ \frac{\hbar}{2}\sum_{\substack{a+a'=d-1\\ \mu, \mu'\\m, m'}}(-1)^{a'}
g^{\mu\otimes m, \mu'\otimes m'}
\frac{B_{d+1}(\frac{m}r)}{(d+1)!}\frac{\partial ^2}{\partial t_{a}^{\mu\otimes m} \partial
t_{a'}^{\mu'\otimes m'}},
\end{multline*}
where all summations are taken over the range $a\ge 0$ and $1\leq m \leq r$.
\end{pro}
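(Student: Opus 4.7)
The plan is to differentiate $Z_r$ with respect to $s_d$, which brings down a factor of $\ch_d$ inside each integral, and then to replace $\ch_d\cap [X_{g,n,D}^{r,\pmb m}]^v$ via the Grothendieck--Riemann--Roch expression of Proposition~\ref{pro:GRRonX}. One then identifies the four resulting types of terms with the four building blocks of the operator $L_d$ acting on $Z_r=\exp(F_r)$.

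First, since $F_r=\sum_g\hbar^{g-1}F_{g,r}$ and the $s_d$-variables appear only through the factor $\exp(\sum_d s_d\ch_d)$, we have
$$
\frac{\partial Z_r}{\partial s_d}=Z_r\cdot\frac{\partial F_r}{\partial s_d},
$$
and $\partial F_{g,r}/\partial s_d$ is given by the same formula as $F_{g,r}$ but with an extra factor $\ch_d$ in each integrand. Substituting Proposition~\ref{pro:GRRonX} for $\ch_d\cap [X_{g,n,D}^{r,\pmb m}]^v$ splits $\partial F_{g,r}/\partial s_d$ into a $\kappa_d$-piece, a sum over markings $\psi_i^d$-piece, a non-separating boundary piece, and a separating boundary piece.

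Next, I would match each piece with a summand of $L_d Z_r$. For the $\kappa_d$-piece, the identity $\kappa_d\cap [X_{g,n,D}^{r,\pmb m}]^v=\pi_*\bigl(\psi_{n+1}^{d+1}\cap [X_{g,n+1,D}^{r,(\pmb m,1)}]^v\bigr)$, which follows from $\kappa_d=\pi_*(c_1(\omega_{\log})^{d+1})$, the forgetting property of Proposition~\ref{pro:propvirt}, and the comparison between $c_1(\omega_{\log})$ and $\psi_{n+1}$ for a marking of index $1$, produces precisely the term $\frac{B_{d+1}(1/r)}{(d+1)!}\,\partial/\partial t_{d+1}^{1\otimes 1}$ acting on $Z_r$: inserting a new marking with index $m_{n+1}=1$ and cohomology label $h_1=1$, with an extra $\psi_{n+1}^{d+1}$, exactly lowers one factor $t_{d+1}^{1\otimes 1}$ from the expansion of $Z_r$. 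The $\psi_i^d$-piece is even simpler: the sum over markings, combined with the combinatorial factor $1/n!$ and the symmetrization of the $t^{\mu\otimes m}_a$-monomials, reconstitutes $\sum_{a,\mu,m} t_a^{\mu\otimes m}\partial/\partial t_{a+d}^{\mu\otimes m}$ with the correct coefficient $B_{d+1}(m/r)/(d+1)!$.

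For the two boundary pieces, I would use the factorization property of Proposition~\ref{pro:propvirt} together with the diagonal-splitting formulas \eqref{eq:virtlI} and \eqref{eq:virtirr}. Applying $(j_{\irr}^q)_*$ and $(j_{l,I})_*$ to $(\psi)^a(-\psi')^{a'}\cap (\mathfrak j)^!\, [X_{g,n,D}^{r,\pmb m}]^v$ and then pushing down by $\mu_{\irr}^q$, resp. $\mu_{l,I}$, one rewrites the integral as a sum of products (or a self-convolution) of $(g-1)$- or $(l,l')$-integrals on smaller moduli with two additional markings of indices $q$ and $q'=r-q$. The diagonal $\delta$ inserts the matrix element $g^{\mu\otimes q,\mu'\otimes q'}$ of the metric on $H$. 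The factor $r/2$ in the GRR formula absorbs, on the one hand, the genus-counting mismatch $r^{g-1}$ versus $r^{l-1}r^{l'-1}$ (separating) or $r^{g-2}$ (non-separating), producing the factor $1/r$, and, on the other hand, the symmetry factor $1/2$ of the two branches at the node. After summing over $g$ with the weights $\hbar^{g-1}$, the separating contribution assembles $(F_r')^2/2$ and the non-separating one assembles $F_r''/2$, which together equal $(\partial^2 Z_r/Z_r)/2$ up to one power of $\hbar$ introduced by the Weyl-quantization convention. This gives the final quadratic term of $L_d$ with coefficient $\hbar/2$, sign $(-1)^{a'}$ (from $(-\psi')^{a'}$), and Bernoulli weight $B_{d+1}(m/r)/(d+1)!$; the symmetry under $(a,m)\leftrightarrow(a',m')$ combined with the identity $B_{d+1}(1-x)=(-1)^{d+1}B_{d+1}(x)$ shows that the expression is consistent despite the apparent asymmetry in the choice of branch.

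The main obstacle will be the bookkeeping of the three compensations just mentioned: the powers of $r$ coming from the $r^{g-1}$-normalization versus the factor $r/2$ in GRR, the $\hbar$-counting between separating and non-separating boundary via Wick's formula for $\partial^2\exp(F_r)$, and the fact that the two distinct boundary sums $\Delta_{l,I}$ and $\Delta_{\irr}^q$ collapse into a single quadratic differential operator on $Z_r$. Once these are handled uniformly, all other identifications are routine consequences of Proposition~\ref{pro:propvirt}.
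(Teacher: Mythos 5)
Your proposal is correct and follows essentially the same route as the paper's proof: differentiate $F_r$ (not $Z_r$ directly) in $s_d$ to insert a factor $\ch_d$, substitute the GRR expansion of Proposition~\ref{pro:GRRonX}, and then match the four resulting pieces ($\kappa_d$, $\sum\psi_i^d$, non-separating boundary, separating boundary) against the four terms that arise when $L_d$ acts on $\exp F_r$, using the forgetting and factorization properties of Proposition~\ref{pro:propvirt} plus the K\"unneth expansion of the diagonal. You also correctly isolate the main bookkeeping points the paper handles explicitly — the compensation between $r/2$ in GRR and the $1/r^{g-1}$ normalization, the $\hbar$-shift between the $\d^2 F$ and $(\d F)^2$ contributions, and the branch-exchange symmetry via $B_{d+1}(1-x)=(-1)^{d+1}B_{d+1}(x)$.
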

\begin{proof}
We can write the statement in terms of $F_r$ using $Z_r=\exp F_r$; we get
\begin{multline}\label{eq:diffonF}
\frac{\partial F_r} {\partial s_d}=
\frac{B_{d+1}(\frac1r)}{(d+1)!}
\frac{\partial F_r}{\partial t_{d+1}^{1\otimes 1}}
-\sum_{\substack{a\ge 0\\ \mu\otimes m}}\frac{B_{d+1}(\frac{m}r)}{(d+1)!}
t_{a}^{\mu\otimes m}\frac{\partial F_r}{\partial t_{a+d}^{\mu\otimes m}}
\\
+ \frac{\hbar}{2}\sum_{\substack{a+a'=d-1\\ \mu, \mu'\\m, m'}}(-1)^{a'}
g^{\mu\otimes m, \mu'\otimes m'}
\frac{B_{d+1}(\frac{m}r)}{(d+1)!}\frac{\partial ^2 F}{\partial t_{a}^{\mu\otimes m} \partial
t_{a'}^{\mu'\otimes m'}} \\
+ \frac{\hbar}{2}\sum_{\substack{a+a'=d-1\\ \mu, \mu'\\m, m'}}(-1)^{a'}
g^{m\otimes\mu, m'\otimes\mu'}\frac{B_{d+1}(\frac{m}r)}{(d+1)!}
\frac{\partial F_{r}}{\partial t_{a}^{\mu\otimes m}}
\frac{\partial F_{r}}{\partial t_{a'}^{\mu'\otimes m'}}.
\end{multline}
Write the right hand side as $R_1+R_2+R_3+
R_4$.
Using  Proposition~\ref{pro:GRRonX}, we decompose also
${\partial F_r}/ {\partial s_d}$
as the sum
of four terms involving
$\kappa_d$, $\psi_i^d$,
classes in the image
of $\bigsqcup_q j_{\irr}^q$, and classes in the image of $\bigsqcup_{l,I} j_{l,I}$.
In each the following four steps we identify these summands.

Step 1: intersection numbers involving $\kappa_d$.
The class $\kappa_d$ can be regarded as
the pushforward of
$\psi_{n+1}^{d+1}$ via $\pi\colon
X_{g,n+1,D}^{r,(\pmb m,1)}\to X^{r,\pmb {m}}_{g,n,D}$.
Then, by the forgetting property, the projection formula yields
$$
\int\limits_{\left[ X_{g,n,D}^{r,\pmb m} \right]^v } \!\!\!\!
\kappa_{d} \;
\prod_{i=1}^n \psi_i^{a_i}\ev_i^*(h_{\mu_i})
 \prod_{j=0}^k\ch_{d_j}
=
\int\limits_{\left[X_{g,n+1,D}^{r,(\pmb m,1)} \right]^v }
\!\!\!\!\!\!\!
\psi_{n+1}^{d+1} \;
\prod_{i=1}^n\psi_i^{a_i}\ev_i^*(h_{\mu_i})\prod_{j=0}^k \ch_{d_j},
$$
where we used the equation
$\psi_{n+1}^{d+1}\pi^*\psi_i=\psi_{n+1}^{d+1}\psi_i$ for all
$d \ge 0$, and $1\le i\le n$.
In this way, we get $R_1$.

Step 2: intersection numbers involving $\psi_i^d$.
These intersections are already in the desired form and yield
immediately $R_2$.

Step 3:
intersection numbers involving classes in the image of $j_{\irr, q}$.
For all $1\le q\le r$, we intersect
$\big(\frac{\hbar}{r}\big)^{g-1}\prod_{i=1}^n
\psi_i^{a_i}\ev_i^*(h_{\mu_i})\prod_{j=0}^k\ch_{d_j}$
with the homology class
$$\frac{r}{2}(j^q_{\irr})_* \!
\left(\sum_{a+a'=d-1}  \!\!\!\!\!
(\psi)^a(-\psi')^{a'}\cap (\mathfrak j_{\irr}^q)^!
\Big [ X_{g,n,D}^{r,\pmb m} \Big ]^v \right)$$
and we multiply by $B_{d+1}(q/r)/(d+1)!$.
Taking aside this last factor, we carry out the intersection
on $\mathcal D_{\irr}^q$, and we get
$$
\frac{\hbar^{g-1}}{2r^{g-2}}
\left[\sum_{a+a'=d-1\!\!\!\!\!\!}(\mu^q_{\irr})^*
\left((\psi_{n+1})^a(-\psi_{n+2})^{a'}\right)
 \cdot
(j_{\irr}^q)^*\left(
\prod_{i=1}^n \psi_i^{a_i}\ev_i^*(h_{\mu_i})
\prod_{j=0}^k\ch_{d_j}\right)\right]\cap
(\mathfrak j_{\irr}^q)^!\Big [ X_{g,n,D}^{r,\pmb m} \Big ]^v
$$
Now instead of integrating via $j_{\irr}^q$ we can integrate
via $\mu^q_{\irr}$.
Notice the identity
$$(j_{\irr}^q)^*\left(\textstyle{\prod_{i=1}^n}
\psi_i^{a_i}\;\ev_i^*(h_{\mu_i})\right)=
(\mu^q_{\irr})^*\left(\textstyle{\prod_{i=1}^n}
\psi_i^{a_i}\;\ev_i^*(h_{\mu_i})\right).$$
As in \cite{JKV}, we also notice that the
terms of the Chern character
in degree $d\ge1$, satisfy the identity
\begin{align*}
&(j_{\irr}^q)^*\ch_d=(\mu^q_{\irr})^*(\ch_d).
\end{align*}
Therefore, the intersection number is
$$
\frac{\hbar^{g-1}}{2r^{g-2}}
\sum_{a+a'=d-1}
(\mu_{\irr}^q)^*\left((\psi_{n+1})^a(-\psi_{n+2})^{a'}
\prod_{i=1}^n \psi_i^{a_i}\;\ev_i^*(h_{\mu_i})
\prod_{j=0}^k\ch_{d_j}\right)\cap
(\mathfrak j_{\irr}^q)^!\left(\Big [ X_{g,n,D}^{r,\pmb m} \Big ]^v\right)
$$
Using the projection formula for $\mu^q_{\irr}$ and the
factorization property  we get
$$
\frac{\hbar^{g-1}}{2r^{g-2}}
\sum_{a+a'=d-1}(-1)^{a'} \!\!\!\!\!\!
\int\limits_{\left[ \Delta_{(\irr ,q)} \right]^v }
 \!\!\!
(\psi_{n+1})^a(\psi_{n+2})^{a'} \;
\prod_{i=1}^n \psi_i^{a_i}\;\ev_i^*(h_{\mu_i})
\prod_{j=0}^k\ch_{d_j}.
$$
Recall that by the K\"{u}nneth formula the Poincar\'{e} dual class
of the diagonal $[\delta]$ in $X\times X$ can be written as
$\sum_{\substack{\mu, \mu'}}g^{\mu,\mu'} h_\mu \times h_{\mu'}$,
where $g^{\mu,\mu'}$ is the inverse matrix of the
Poincar\'{e} pairing matrix $g_{\mu,\mu'}$ on $H^*(X,\QQ)$.
In this way, we obtain the intersection numbers
of $X_{g-1,n+2,D}^{r,(\pmb m,q,q')}$
with an extra $\hbar/2$ factor
$$
\frac{\hbar}{2}\left( \frac{\hbar^{g-2}}{r^{g-2}}
\sum_{\substack{a+a'=d-1\\ \mu, \mu'}}(-1)^{a'}g^{\mu,\mu'} \!\!\!\!\!\!\!\!
\int\limits_{\left[ {X_{g-1,n+2,D}^{r,(\pmb m,q,q')}}\right]^v }
 \!\!\!\!\!\!
(\psi_{n+1})^a\ev_i^*(h_{\mu})(\psi_{n+2})^{a'}\ev_i^*(h_{\mu'}) \;
\prod_{i=1}^n \psi_i^{a_i}\;\ev_i^*(h_{\mu_i})
\prod_{j=0}^k\ch_{d_j}\right),
$$
which, if we sum over $q\in \{1,\dots, r\}$ and take the factor
$B_{d+1}(q/r)/(d+1)!$
into account,
agrees with
$R_3$.

Step 4: intersection numbers involving classes in the image of $j_{l,I}$.
Set $I\subseteq [n]$ and \linebreak $l\in \{0,\dots, g\}$, and write $q$ for $q(l,I)$.
We show that the term involving
$Q^D{t_{a_1}^{\mu_1\otimes m_1}}\dots {t_{a_n}^{\mu_n\otimes m_n}}\hbar^{g-1}$
 in $R_4$ equals
\begin{multline*}
Q^D{t_{a_1}^{\mu_1\otimes m_1}}\dots {t_{a_n}^{\mu_n\otimes m_n}}\left(\frac{\hbar}{r}\right)^{g-1}
\frac{1}{n!}
\prod_{i\in [n]} \psi_i^{a_i}\ev_i^*(h_{\mu_i})
\exp\big(\sum_h s_h \ch_h\big)
\\
\cap
\frac{r}{2}\frac{B_{d+1}\big(\frac{q}{r}\big)}{(d+1)!}(j_{l,I})_*\left(\sum_{a+a'=d-1}\psi^a (-\psi')^{a'}\cap
\mathfrak j_{l,I}^!\left[X_{g,n,D}^{r,\pmb m}\right]\right).
\end{multline*}
As in the previous step we put aside the factor ${B_{d+1}({q}/{r})}/{(d+1)!}$,
and we carry out the intersection on $\Dcal_{l,I}$. We get
\begin{multline}\label{eq:onDlI}
\sum_{a+a'=d-1}(-1)^{a'} Q^D\frac{t_{a_1}^{\mu_1\otimes m_1}\dots
{t_{a_n}^{\mu_n\otimes m_n}}}{n!}\frac{\hbar^{g-1}}{2r^{g-2}}
(j_{l,I})^*\left(\prod_{i\in [n]} \psi_i^{a_i}\ev_i^*(h_{\mu_i})
\exp\big(\sum_h s_h \ch_h\big)\right)
\\(\mu_{l,I})^*\left(
(\psi_{\abs{I}+1})^a\times(\psi_{\abs{I'}+1})^{a'}\right)
\cap
(\mathfrak j_{\irr}^q)^!\Big [ X_{g,n,D}^{r,\pmb m} \Big ]^v,
\end{multline}
where we identified the classes $\psi$ and $\psi'$ on
$\Dcal_{l,I}$ with pullbacks via $\mu_{l,I}$.

Notice that the classes $\psi_i$, $\ev_i^*(h_{\mu_i})$,
and $\ch_d$ satisfy the relations
\begin{align*}
&(j_{l,I})^*\left(\textstyle{\prod_{i=1}^n} \psi_i^{a_i}\ev_i^*(h_{\mu_i})\right)=
(\mu_{l,I})^*\left(\textstyle{\prod_{I}}\psi_i^{a_i}\ev_i^*(h_{\mu_i})\times
\textstyle{\prod_{I'}}\psi_i^{a_i}\ev_i^*(h_{\mu_i})\right),\\
&(j_{l,I})^*\ch_d=(\mu_{l,I})^*((\ch_d\times 1)+ (1\times \ch_d)),&
 \text{for $d\ge 1$.}
\end{align*}
Recall the factorization property of the virtual class from Proposition \ref{pro:propvirt}
$$(\mu_{l,I})_*(\mathfrak j_{l,I})^!\big[X_{g,n,D}^{r,\pmb m} \big]^v=
\sum_{(l,I,A)\in \Omega_{l,I}}\left[\Delta_{(l,I,A)}\right]^v=
\Big[X_{l,b,A}^{r,(\pmb m_I,q)}\Big]^v\times
\Big[X_{l',b',A'}^{r,(\pmb m_{I'},q')}\Big ]^v\cap
(\ev\times \ev')^{-1}(\delta),$$
where $b=\abs{I}+1$ and $b'=\abs{I'}+1$.
These relations, together with formal properties of
the exponent function, allow us to rewrite each summand appearing in
the alternate sum
\eqref{eq:onDlI} in terms  of intersections on the fibred product
$X_{l,b,A}^{r,(\pmb m_I,q)}\times_X
X_{l',b',A'}^{r,(\pmb m_{I'},q')}$:
\begin{multline*}
\frac{\hbar}{2}\sum_{A+A'=D}
\left[Q^A\left(\frac{\hbar}{r}\right)^{l-1}
\prod_{i\in I} \frac{t_{a_i}^{\mu_i\otimes m_i}}{n!}
\left(\psi_{b}^{a}\ev_{b}^*({h_{\mu}})
\prod_{i\in I} \psi_i^{a_i}\ev_i^*(h_{\mu_i})
\exp\big(\sum_h s_h \ch_h\big)\right)
\cap \left[X_{l,b,A}^{r,\pmb m_I,q}\right]^v\right]
\\
\left[Q^{A'}\left(\frac{\hbar}{r}\right)^{l'-1}
\prod_{i\in I'} \frac{t_{a_i}^{\mu_i\otimes m_i}}{n!}
\left(\psi_{b'}^{a'}\ev_{b'}^*({h_{\mu'}})
\prod_{i\in I'} \psi_i^{a_i}\ev_i^*(h_{\mu_i})
\exp\big(\sum_h s_h \ch_h\big)\right)
\cap \left[X_{l',b',A'}^{r,\pmb m_{I'},q'}\right]^v\right],
\end{multline*}
where the K\"{u}nneth formula  $[\delta]=
\sum_{\substack{\mu, \mu'}}g^{\mu,\mu'} h_\mu \times h_{\mu'}$ has been used.
Taking into account the factor ${B_{d+1}({q}/{r})}/{(d+1)!}$,
this yields  the monomial of $R_4$ involving
$Q^D{t_{a_1}^{\mu_1\otimes m_1}}\dots {t_{a_n}^{\mu_n\otimes m_n}}\hbar^{g-1}$.
\end{proof}

\section{Givental's quantization}
\label{Sec:quant}

\setcounter{equation}{0}
\setcounter{subsection}{1}

Now we can prove our main Theorem~\ref{Thm:main}.

We use the notation from Section~\ref{Ssec:mainresult}.
In particular, $H = H^*(X,\QQ) \otimes \QQ^{r-1}$ is a
vector space with a quadratic form~$g$ and $\HH = H((z^{-1}))$
is the corresponding infinite-dimensional symplectic space.

\begin{pro}
The operator $L_d$ from Proposition~\ref{pro:diffop}
is obtained by Weyl quantization rules
from the hamiltonian
$$
P_d = - \sum_{a=0}^\infty \sum_{\mu,m}
\frac{B_{d+1}(\frac{m}r)}{(d+1)!}
q_a^{\mu \otimes m} p_{a+d,\mu \otimes m}
+ \frac12
\sum_{\substack{a+a'=d-1\\ \mu, \mu',m,m'}}
(-1)^d \frac{B_{d+1}(\frac{m}r)}{(d+1)!}
g^{\mu \otimes m, \mu' \otimes m'} p_{a,\mu \otimes m}
p_{a',\mu' \otimes m'}.
$$
on~$\HH$.
\end{pro}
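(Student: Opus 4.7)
The proof is a direct computation matching both sides term by term. My plan is to: (i) apply the Weyl quantization rules to the two pieces of $P_d$, (ii) use the dilaton shift to translate from Darboux coordinates to time variables, and (iii) identify the resulting three summands with those in $L_d$ from Proposition~\ref{pro:diffop}.

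First I would apply the quantization rules $q_a^{\mu \otimes m}\mapsto q_a^{\mu\otimes m}/\sqrt{\hbar}$ and $p_{a,\mu\otimes m}\mapsto \sqrt{\hbar}\,\partial/\partial q_a^{\mu\otimes m}$, using the convention that derivations act first. The $\sqrt{\hbar}$ factors cancel in the $qp$-piece, yielding $-\sum \frac{B_{d+1}(m/r)}{(d+1)!}\,q_a^{\mu\otimes m}\,\partial/\partial q_{a+d}^{\mu\otimes m}$; in the $pp$-piece the two $\sqrt{\hbar}$'s combine into an overall $\hbar$, producing $\frac{\hbar}{2}\sum(-1)^d\frac{B_{d+1}(m/r)}{(d+1)!}\,g^{\mu\otimes m,\mu'\otimes m'}\,\partial^2/\partial q_a^{\mu\otimes m}\partial q_{a'}^{\mu'\otimes m'}$.

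Next I would switch from $q$ to $t$. The dilaton shift $q_a^{\mu\otimes m}=t_a^{\mu\otimes m}-\delta_{(a,\mu,m),(1,1,1)}$ affects only the multiplication operator at $(a,\mu,m)=(1,1,1)$, since derivatives are unchanged. The constant $-1$ in this splitting, paired with its Hamiltonian coefficient $-B_{d+1}(1/r)/(d+1)!$, produces the first summand $\frac{B_{d+1}(1/r)}{(d+1)!}\,\partial/\partial t_{d+1}^{1\otimes 1}$ of $L_d$. The remaining $t\,\partial/\partial t$ contribution from the $qp$-piece reproduces the second summand of $L_d$ verbatim.

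The one substantive step is checking that the quadratic piece matches the third summand of $L_d$, which carries the sign $(-1)^{a'}$ rather than the $(-1)^d$ appearing in $P_d$. Here I would use the symmetry of $\partial^2/\partial t_a^{\mu\otimes m}\partial t_{a'}^{\mu'\otimes m'}$ under $(a,\mu\otimes m)\leftrightarrow(a',\mu'\otimes m')$ together with the constraint $m+m'=r$ forced by non-vanishing of $g^{\mu\otimes m,\mu'\otimes m'}$. The Bernoulli reflection identity $B_{d+1}(1-x)=(-1)^{d+1}B_{d+1}(x)$, combined with the parity relation $(-1)^{a+d+1}=(-1)^{a'}$ on the slice $a+a'=d-1$, collapses the symmetrized coefficient into the form appearing in $L_d$. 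This last step is the only non-mechanical part of the argument; it is more a matter of careful bookkeeping than a real obstacle, and completes the identification.
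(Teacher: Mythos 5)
Your overall plan — quantize the two pieces, absorb the dilaton shift into the first summand of $L_d$, and match the remaining terms — is the right one, and your treatment of the $qp$-piece (including the extraction of the constant term from $q_1^{1\otimes 1}=t_1^{1\otimes 1}-1$) is correct. The gap is in the last step, where you claim that symmetrization plus the reflection identity turns the $(-1)^d$ in $P_d$ into the $(-1)^{a'}$ of $L_d$. That identity does \emph{not} hold. On the support $m+m'=r$, the reflection $B_{d+1}(1-x)=(-1)^{d+1}B_{d+1}(x)$ together with $(-1)^{a+d+1}=(-1)^{a'}$ shows that the coefficient $(-1)^{a'}B_{d+1}(m/r)\,g^{\mu\otimes m,\,\mu'\otimes m'}$ is already invariant under $(a,\mu\otimes m)\leftrightarrow(a',\mu'\otimes m')$, so symmetrizing it changes nothing; by contrast, symmetrizing $(-1)^dB_{d+1}(m/r)$ gives
$\tfrac12(-1)^d\bigl[B_{d+1}(m/r)+B_{d+1}(m'/r)\bigr]=\tfrac12\bigl[(-1)^d-1\bigr]B_{d+1}(m/r)$,
which is identically zero for $d$ even and equals $-B_{d+1}(m/r)$ for $d$ odd — in no case does it acquire the factor $(-1)^{a'}$. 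A concrete check: for $r=2$, $d=1$ (so $a=a'=0$, $m=m'=1$, $B_2(1/2)=-1/12$), the quadratic piece of the quantization of $P_1$ as stated is $+\frac{\hbar}{48}\,\partial^2/\partial(t_0^{1\otimes 1})^2$, whereas the quadratic piece of $L_1$ is $-\frac{\hbar}{48}\,\partial^2/\partial(t_0^{1\otimes 1})^2$.

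The source of the discrepancy is a sign misprint in the displayed Hamiltonian: the coefficient of the quadratic piece should be $(-1)^{a'}$, not $(-1)^d$ (equivalently, one may keep $(-1)^a$ but replace $B_{d+1}(m/r)$ by $B_{d+1}(m'/r)$). This can be verified independently by computing the Hamiltonian of the infinitesimal symplectic transformation in the next proposition, i.e.\ $\tfrac12\omega(Af,f)$ with $A$ given by multiplication by $\frac{z^d}{(d+1)!}\,\mathrm{Id}\otimes\mathrm{diag}\bigl[B_{d+1}(\frac{1}{r}),\dots,B_{d+1}(\frac{r-1}{r})\bigr]$: the residue computation produces precisely the $(-1)^{a'}$ coefficient. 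With the corrected sign, your argument closes immediately — the $\sqrt{\hbar}$'s combine into $\hbar$ and all three summands of $L_d$ appear term by term, with no need to invoke symmetrization or the Bernoulli reflection at all. So rather than ``collapsing'' $(-1)^d$ to $(-1)^{a'}$, the right move is to observe that the two formulas are inconsistent and correct the stated $P_d$.
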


\begin{pro}
The vector field, or the
infinitesimal symplectic transformation, induced by this
hamiltonian is the multiplication by
$$
\frac{z^d}{(d+1)!} \; {\rm id} \otimes \mbox{\rm diag} \left[
B_{d+1} \left( \frac1r \right), \dots, B_{d+1} \left( \frac{r-1}r \right)
\right].
$$
\end{pro}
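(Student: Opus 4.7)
The plan is to verify the identity by a direct computation in Darboux coordinates, with the Bernoulli reflection identity $B_n(1-x)=(-1)^n B_n(x)$ as the main algebraic input. As a preliminary check I would verify that the operator $A(z):=\frac{z^d}{(d+1)!}\,\id\otimes\mathrm{diag}[B_{d+1}(1/r),\dots,B_{d+1}((r-1)/r)]$ is infinitesimally symplectic, i.e.\ that $A(-z)^*+A(z)=0$ with respect to the pairing on $H$. Since $g_{ab}=\delta_{a+b,r}$, the adjoint reflects the diagonal entries via $m\mapsto r-m$, and the Bernoulli reflection then gives $D^*=(-1)^{d+1}D$; combined with $(-z)^d=(-1)^d z^d$ this yields $A(-z)^*=-A(z)$.

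Next I would write down the Hamiltonian vector field $X_{P_d}$ in Darboux coordinates. A short residue computation from the explicit form of $\omega$ in \S\ref{Sssec:Givental} shows that $\omega(\partial_{q_a^\mu},\partial_{p_{b,\nu}})=-\delta^{a,b}\delta^\nu_\mu$, so Hamilton's equations read $\dot q=-\partial P_d/\partial p$ and $\dot p=\partial P_d/\partial q$. The $qp$-term of $P_d$ then gives $\dot q_b^{\nu\otimes n}=\frac{B_{d+1}(n/r)}{(d+1)!}\,q_{b-d}^{\nu\otimes n}$ for $b\ge d$ and $\dot p_{b,\nu\otimes n}=-\frac{B_{d+1}(n/r)}{(d+1)!}\,p_{b+d,\nu\otimes n}$ for all $b\ge 0$.

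In parallel I would expand $A(z)\cdot f(z)$ along the decomposition $f(z)=f_+(z)+f_-(z)$ given in \S\ref{Sssec:Givental}. The identity $A(z)f_+=\frac{1}{(d+1)!}\sum q_a^{\mu\otimes m}B_{d+1}(m/r)\,z^{a+d}\,h_\mu\otimes e_m$ reproduces the $\dot q_b$ with $b\ge d$ on the nose. The expansion of $A(z)f_-$ splits according to the sign of $d-a-1$: the part with $a\ge d$ lies in strictly negative powers of $z$ and yields new $p$-coordinates, matching $\dot p$ after using $B_{d+1}((r-m)/r)=(-1)^{d+1}B_{d+1}(m/r)$; the part with $0\le a\le d-1$ lies in non-negative powers of $z$ and yields new $q$-coordinates, which should match the contribution of the $pp$-term of $P_d$ to $\dot q$.

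The main obstacle is the sign and index bookkeeping in this last case: three sources of signs interact---the $(-1)^{a'}$ appearing in the $pp$-term of $P_d$, the $(-1)^{a+1}$ built into the Darboux basis of $f_-$, and the $(-1)^{d+1}$ from Bernoulli reflection---and they must conspire. One symmetrizes $\partial P_d/\partial p_{b,\nu\otimes n}$ over the swap $(a,\mu,m)\leftrightarrow(a',\mu',m')$ (noting that $g^{\mu\otimes m,\mu'\otimes m'}$ forces $m+m'=r$), applies the reflection identity to pull both Bernoulli factors to $B_{d+1}(n/r)$, and compares with the coefficient obtained in $A(z)f_-$ under the index shift $a\mapsto d-1-b$. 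Once this is settled, the two linear operators $X_{P_d}$ and multiplication by $A(z)$ agree coefficient by coefficient; together with the preceding proposition this completes the proof of Theorem~\ref{Thm:main}.
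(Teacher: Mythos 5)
Your proposal is correct and takes exactly the route the paper has in mind (the paper itself gives no details, merely noting that both propositions "are proved by a simple computation"). The only caveat worth flagging is a notational discrepancy with the preceding proposition as stated in the paper: the $pp$-term of the Hamiltonian is printed there with a coefficient $(-1)^d$, whereas your bookkeeping (correctly) uses $(-1)^{a'}$. The $(-1)^{a'}$ is what the Weyl quantization of the operator $L_d$ from Proposition~\ref{pro:diffop} actually produces, and it is what makes your three signs --- $(-1)^{a'}$, the $(-1)^{a+1}$ in the $f_-$ Darboux expansion, and the $(-1)^{d+1}$ from $B_{d+1}(1-x)=(-1)^{d+1}B_{d+1}(x)$ --- cancel as claimed; with $(-1)^d$ the $pp$-term of $\widehat{P_d}$ would not match $L_d$ (the symmetrized coefficient would be $\frac12(-1)^d\bigl[1+(-1)^{d+1}\bigr]B_{d+1}(m/r)$ rather than $(-1)^{a'}B_{d+1}(m/r)$), so the $(-1)^d$ in the paper is evidently a typo that you have silently corrected. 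The residue computation $\omega(\partial_{q_a^\mu},\partial_{p_{b,\nu}})=-\delta^{a,b}\delta^\nu_\mu$, the resulting Hamilton's equations, the adjoint computation $D^*=(-1)^{d+1}D$ (from $g_{ab}=\delta_{a+b,r}$ reversing the diagonal plus the Bernoulli reflection), and the index shift $a\mapsto d-1-b$ in the $f_-$ part are all accurate.
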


Both propositions are proved by a simple computation.

Theorem~\ref{Thm:main} follows. \qed

\end{document}